 \theoremstyle{plain}
\newtheorem{thm}{Theorem}[section]
\theoremstyle{plain}
  \newtheorem{prop}[thm]{Proposition}
\theoremstyle{plain}
\theoremstyle{plain}
 \newtheorem{lemma}[thm]{Lemma}
\theoremstyle{plain}
\theoremstyle{plain}
\theoremstyle{definition}
 \theoremstyle{definition}
\theoremstyle{remark}
\numberwithin{equation}{section}
\newcommand{\Z}{\mathbb{Z}}
\newcommand{\Q}{\mathbb{Q}}
\newcommand{\C}{\mathbb{C}}
\newcommand{\F}{\mathbb{F}}
\newcommand{\fa}{\mathfrak{a}}
\newcommand{\fb}{\mathfrak{b}}
\newcommand{\fc}{\mathfrak{c}}
\newcommand{\fd}{\mathfrak{d}}
\newcommand{\fM}{\mathfrak{M}}
\newcommand{\fN}{\mathfrak{N}}
\newcommand{\fS}{\mathfrak{S}}
\newcommand{\fw}{\mathfrak{w}}
\newcommand{\fA}{\mathfrak{A}}
\newcommand{\fm}{\mathfrak{m}}
\newcommand{\cA}{\mathcal{A}}
\newcommand{\cM}{\mathcal{M}}
\newcommand{\cO}{\mathcal{O}}
\newcommand{\cS}{\mathcal{S}}
\newcommand{\eps}{\varepsilon}
\newcommand{\ku}{k[\![u]\!]}
\newcommand{\Gal}{\mathrm{Gal}}
\newcommand{\Hom}{\mathrm{Hom}}
\newcommand{\Ind}{\mathrm{Ind}}
\newcommand{\GL}{\mathrm{GL}}
\newcommand{\Spec}{\mathrm{Spec}}
\newcommand{\cris}{\rm{cris}}
\newcommand{\upi}{{\underline \pi}}
\newcommand{\Vbar}{{\overline V}}
\DeclareMathOperator{\coker}{coker}
\DeclareMathOperator{\Fil}{Fil}
\DeclareMathOperator{\st}{{st}}
\DeclareMathOperator{\rM}{{M}}
\newcommand{\Qpbar}{\overline{\Q}_p}
\newcommand{\wt}{\widetilde}
\newcommand{\M}{{\mathfrak M}}
\title[Reductions of crystalline representations]{Reductions of certain 3-dimensional crystalline representations}
\author{Tong Liu}
\address{Department of Mathematics,
Purdue University, 
	 150 N. University Street, 
	West Lafayette, Indiana 47907, USA}
\email{tongliu@math.purdue.edu}
\date{\today}
\subjclass[2000]{11F80 (11F85)}
\begin{document}

\begin{abstract}
We compute reduction  $\bar \rho $ of 3-dimensional irreducible crystalline representations $\rho$ of $G_{\Q_p}$ with Hodge-Tate weights $\{0, r , s\}$ satisfying $2 \leq r \leq p-2, \ \  2+p \leq s \leq r + p-2.$ If $\bar \rho$ is irreducible then we show that the generic fiber of crystalline universal deformation ring of $\bar \rho$ is connected. 
\end{abstract}
\maketitle

\setcounter{tocdepth}{1}
\tableofcontents

\section{Introduction}

The aim of this paper is to explicitly compute reduction of certain family of 3-dimensional crystalline representations. Let $V$ be an \emph{irreducible} 3-dimensional crystalline representation of $G_{\Q_p}$ with coefficient field $F$. Suppose that Hodge -Tate weights of  $V$ are $\{0, r , s\}$, which satisfies the following \emph{generic} condition: 
$$2 \leq r \leq p-2, \ \  2+p \leq s \leq r + p-2. $$
Let $T \subset V$ be a $ G$-stable $\cO_F$-lattices, our aim is to decide the \emph{semi-simiplification} $\overline  V $ of reduction $T/ \varpi T$, where $\varpi$ is a fixed uniformizer of $\cO_F$. Our method follows the idea in \cite{BergdallLevin-BLZ}, \cite{BLL-semistable}  which calculate the integral Kisin module $\M(T)$ for $T$ and then obtain $\overline V$ via $\M (T) / \varpi \M(T)$. More precisely,   we first give explicit classifications of $V$ in term of strongly divisible lattice inside weakly admissible filtered $\varphi$-module $D= D^*_{\rm cris} (V)$: There are 3 Types with some explicit parameters (see Theorem \ref{them-classification-SD-lattices}). Then we are able to obtain Kisin module $\cM (T)$ over $S= \cO_F[\![u, \frac{E^p}{p}]\!]$ with $E= u +p$ by the strategy described in \cite{BLL-semistable}. The difficult step is to descend $\cM(T)$ to a Kisin module $\fM$ over $\fS : = \cO_F[\![u]\!]$. This involve quite complicated calculation to select suitable basis of $\cM(T)$ so that the matrix of Frobenius $\varphi$ for this basis have entries inside $\fS$.  To this end, we obtain many situations of reduction type of $\overline V$ which depends on the valuations of certain function of parameters for these 3 types. See \S \ref{subsec-reducible-red} and  \S \ref{subsec-irreducible-red} for more details. 

 Finally, we use these reduction types to study universal crystalline deformation ring constructed in \cite{kisin4}. Let $\bar \rho : G_{\Q_p} \to \GL_3 (\F)$ be a residue representation. If $\bar \rho$ is irreducible then by \cite{kisin4} for Hodge-Tate type ${\bf v}: =\{0 , r, s\}$ there exists the universal crystalline deformation ring $R_{\bar \rho} ^{\bf v}$.

\begin{thm}[Theorem \ref{thm-deformation}]\label{thm-deformation-mian} Let  $\rho: G_{\Q_p} \to \GL_3 (\cO_F)$ be a crystalline representation with Hodge-Tate weights $\{0, r, s\}$ satisfying $2 \leq r \leq p-2, \ \  2+p \leq s \leq r + p-2. $	Assume that the reduction $\bar \rho: = \rho \mod \varpi $ is irreducible. Then $\Spec( R^{\bf v}_{\bar \rho}[\frac 1 p ])$ is connected. 
\end{thm}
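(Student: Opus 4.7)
The plan is to use the explicit classification of the crystalline representations together with the reduction computations already carried out in the paper, and then invoke the standard strategy (as used e.g.\ in \cite{BergdallLevin-BLZ} and \cite{BLL-semistable}) for proving connectedness of crystalline deformation rings via an explicit family.

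First, I would recall Kisin's structural results on $R^{\bf v}_{\bar\rho}$: its generic fiber $R^{\bf v}_{\bar\rho}[\tfrac{1}{p}]$ is Jacobson and formally smooth, and its closed points (with values in finite extensions $F'/F$) correspond bijectively to isomorphism classes of crystalline representations $\rho': G_{\Q_p}\to\GL_3(\cO_{F'})$ with Hodge--Tate weights $\{0,r,s\}$ whose reduction is isomorphic to $\bar\rho\otimes_\F \F'$. Since $R^{\bf v}_{\bar\rho}[\tfrac{1}{p}]$ is regular, connectedness is equivalent to irreducibility, and it suffices to exhibit a connected set of closed points which is dense (equivalently, to exhibit a connected family of crystalline lifts that realizes every closed point).

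Next, I would translate the problem to the parameter side using Theorem \ref{them-classification-SD-lattices}: via $D^*_{\cris}$, closed points correspond to weakly admissible filtered $\varphi$-modules $D$ of the prescribed Hodge--Tate type, and the classification puts each such $D$ into one of three Types, each parametrized by finitely many explicit parameters (living in an open subset of an affine space over $\bar\Q_p$). For each Type, the explicit computations in \S\ref{subsec-reducible-red} and \S\ref{subsec-irreducible-red} express the isomorphism class of $\overline V$ in terms of the $p$-adic valuations of certain explicit rational functions of the parameters. Fix the target irreducible $\bar\rho$; within each Type, the locus of parameters producing this reduction is cut out by a finite conjunction of valuation (in)equalities, hence is a rigid-analytic admissible open in a connected parameter space. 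I would then verify directly, using the explicit form of those inequalities (strict inequalities on convex functions of the parameters, together with invertibility conditions), that each such locus is in fact path-connected: it is an intersection of connected open polyannuli.

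The main obstacle will be to bridge the different Types: a priori the same $\bar\rho$ could be produced by parameters from more than one Type, and one must show the corresponding loci lie in the same connected component of $R^{\bf v}_{\bar\rho}[\tfrac{1}{p}]$. I would address this by constructing, for each pair of Types that produce the target $\bar\rho$, an explicit one-parameter family of weakly admissible filtered $\varphi$-modules that degenerates (at a boundary parameter) from one Type to the other while maintaining the reduction $\bar\rho$. Concretely, one tracks what happens to the Frobenius matrix and filtration as a parameter approaches a boundary (where the characteristic polynomial of $\varphi$ becomes reducible, or the Hodge filtration becomes non-generic), and checks via the Kisin module descent already carried out in the main theorem that the reduction is unchanged along the family. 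Once this bridging is established, the union of all these loci is connected, its image in $\Spec R^{\bf v}_{\bar\rho}[\tfrac{1}{p}]$ is dense, and the theorem follows.
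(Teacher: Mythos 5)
Your high-level picture — that one should use the explicit classification and reduction computations, connect different Types by moving continuously through parameter space, and conclude connectedness — is in the same spirit as the paper's proof. But there is a genuine gap in the mechanism you propose for passing from a ``connected family of parameters'' to connectedness of $\Spec(R^{\bf v}_{\bar\rho}[\tfrac1p])$, and this gap is exactly where the real work lies.

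You argue via rigid-analytic connectedness of loci in the parameter space of weakly admissible filtered $\varphi$-modules, and then assert that the image in $\Spec(R^{\bf v}_{\bar\rho}[\tfrac1p])$ is a dense connected set. This does not directly prove connectedness of the scheme: a bijection on closed points between a rigid-analytic parameter space and a Jacobson scheme does not by itself transport topological connectedness, and a chain of one-parameter degenerations of filtered $\varphi$-modules does not automatically produce a map of schemes into $\Spec(R^{\bf v}_{\bar\rho}[\tfrac1p])$. What the paper does instead is build, for each configuration, a Kisin module $\M_\Lambda$ over $\fS_\Lambda = \Lambda[\![u]\!]$ with $\Lambda$ a complete local noetherian $\cO_F$-\emph{domain} (e.g.\ $\Lambda = \cO_F[\![a_1,b_1,c_1,z]\!]/(b_1'-pz)$), using Proposition \ref{prop-algorithm-lambda}. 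This family yields a representation $\rho_\Lambda: G_\infty \to \GL_3(\Lambda)$, hence a map $R_\infty \to \Lambda$ from Kim's $G_\infty$-height-$s$ deformation ring, and — crucially — the factorization $R_\infty \twoheadrightarrow R^{\bf v}_{\bar\rho} \to \Lambda$. That factorization uses two inputs you do not mention: (a) the result of \cite{LLHLM} that $R^{\bf v}_{\bar\rho}$ is a quotient of $R_\infty$ provided $\operatorname{ad}\bar\rho$ is cyclotomic-free, which in turn requires a verification $a^{p^i - p^j}\not\equiv p^2 \pmod{p^3-1}$ under the generic hypotheses on $r,s$; and (b) a $\Z_p$-flatness/reducedness argument to see the map factors through the crystalline quotient. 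Since $\Lambda$ is a domain, $\Spec(\Lambda[\tfrac1p])$ is connected, and this is what ultimately forces the two endpoints $x_1, x_2$ into the same component of $\Spec(R^{\bf v}_{\bar\rho}[\tfrac1p])$.

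Your bridging idea between Types is close in spirit to what the paper does (the paper uses the explicit relations ${\rm I}(a,b,c) = {\rm II}(a_2,b_2,c_2)$ from \S\ref{subsubsec-I to II} and the extended ``Situation (ii)'' variant of the Type II computation where $c \in F$), but again the bridging must be realized by a Kisin module over a domain $\Lambda$ so that the passage produces a connected subscheme of $\Spec(R^{\bf v}_{\bar\rho}[\tfrac1p])$, not merely a path in a parameter space. Your proposal as stated does not supply this mechanism, so the final step ``the theorem follows'' is not justified.
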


\subsection*{Acknowledgment:} The author would like to thank for Toby Gee, Daneil Lei and Brandon Levin  for useful discussion and comments when prepare this paper. 

\subsection{Notations} We use $[a_1 , \dots , a_n]$ to denote $n \times n$-diagonal matrix with $a_1, \dots , a_n$ on the main diagonal. Let $R$ be a ring then ${\rm M}_d (R)$ (resp. $\GL_d (R)$) denote the ring (resp. group) of (resp. invertible) $d\times d$-matrices with entries in $R$.  Suppose that $\varphi : R \to R$ be a ring endomorphism. Pick $X\in \GL_n (R)$ and $A\in {\rm M}_d (R)$. We write $X*_\varphi A = X A \varphi (X^{-1})$ and $X* A = X A X^{-1}$.  We use $v_p (\cdot)$ to denote $p$-adic valuation so that $v_p (p) =1$. We reserve  $E = u +p \in \fS$ and write $\gamma_i (E): = (\frac{E^p} p )^i. $
 
\section{Classification of Irreducible Strongly Divisible Lattices} 

Let $F$ be a finite extension of $\Q_p$. Write $\cO= \cO_F$ the ring of integers. Fix an uniformizer $\varpi \in F$. 
Let $V$ be an irreducible 3-dimensional crystalline representation of $G_{\Q_p}$ with coefficient field $F$ and Hodge-Tate weights $\{0, r, s\}$. 
Recall that $V$ is classified by weakly admissible filtered $\varphi$-module $D= D^*_{\rm cris} (V) = \Hom_{\Q_p[G_{\Q_p}]} (V , B^+_{\cris})$. Furthermore, by \cite{Laffaille} there exists a \emph{strongly divisible lattice}  $M \subset D$, in that sense that $M$ is a finite free $\cO_F$-submodule of $D$ satisfying
\begin{enumerate}
	\item $F \otimes_{\cO_F} M = D$; 
	\item set $\Fil ^i M : = M \cap\Fil ^ i D $. Then $\varphi(\Fil ^ i M)\subset p ^i M; $
	\item $\sum_{i}\frac{\varphi}{p ^i}(\Fil ^ i M)= M$. 
\end{enumerate}
Our first aim is to provide explicit matrices to describe such $M$ and simplify such matrix if possible. 
In this section, we only assume that $0 < r <s$. 
By the above discussion, we may select $\cO$-basis $e_1 , e_2 , e_3$ so that $e_2 \in \Fil ^r M $ and $e_3 \in \Fil ^s M$. Therefore, we obtain a relation 
$$\varphi (e_1, e_2 , e_3) = (e_1, e_2 , e_3)A [1, p ^r, p ^s]$$ where $A = (a_{ij})_{3 \times 3} \in \GL_3 (\cO)$ and $[1, p ^r , p ^s]$ denotes the diagonal matrix with $1, p ^r$ and $p ^s $ in the main diagonal. If we want to change basis of $M$, say $(f_1, f_2, f_3) = (e_1, e_2 , e_3) X^{-1 }$ with $X \in \GL_3 (\cO)$, then $X$ can be only \emph{lower triangular} because we need $f_2 \in \Fil ^r M$ and $f_3 \in \Fil ^s M$.  Note that 
$$\varphi (f_1 , f_2 , f_3)= (f_1 , f_2 , f_3) X A[1, p^r , p ^s] X^{-1}. $$
Write $\Lambda = [1, p ^ r, p ^s]$. Since  $X$ is lower triangular, it is easy to check that $ X* A\Lambda = A ' \Lambda$ with $A' \in \GL_3 (\cO)$. 
Our goal in this section  is to find $X$ so that $ A'$ is as simple as possible. 
\subsection{Elementary operations} Now let us discuss some special types $X$ which will be extensively used in the following. 
Let $S_i (c)$ denote matrix which induces row operation multiplying $c$ to $i$-th row. It is clear that $ S_i (c)* A\Lambda$ will multiply $c$ in $i$-row but $c^{-1}$ to $i$-th columns of $A\Lambda$. When $c \in F$  (resp.  $c\in \cO^\times$) then $S_i (c)^* A\Lambda$ corresponds to change basis in $M [\frac 1 p]$  (resp.  $M$).

Let $R_{ij}(a)$ denote the matrix which  induces a row operation adding $a$-multiple of $i$-th row to the $j$-th row.  Note that $ R_{ij} (a)^{-1}$ induces a column operation which adding $-a$-multiple of $j$-th column to $i$-th column. 
%If $X$ is an invertible matrix, we denote $X * A = X A X ^{-1}$ and $X*_\varphi A = X A \varphi(X^{-1})$. 
For each $j$, define $\tilde i_j  : = \max\{i| v_p (a_{ij})= 0 \}$.  Now for each fixed $j$, if $i > \tilde i _j$ then $v_p (a_{ij})>0$. 
 In the following, we always use $R_{ \tilde i _j i } (-\frac{a_{ij}}{a_{\tilde i_j j}})$ to kills $ij$-th entry of $XA\Lambda$. %We notices that $X$ is lower triangular. 
 %Since $X* A\Lambda$ is obtained by adding $-\frac{a_{ij}}{a_{\tilde i_j j}}$-multiple of $i$-th column of $XA\Lambda$ to $\tilde i_j$-th column, we see that if $b_{kl}$ inside $X A\Lambda$ is unit then $kl$-th entry of $X* A \Lambda$ still remains unit.   But $ij$-th entry of $X*A\Lambda$ is either 0 or some entry with valuation increasing at least $i - \tilde i_j$ (than the original entry $a_{ij}$). 
 
\begin{lemma}\label{lem-eliminate} For each fixed $j$,  there exists a change of basis $M$ so that  $ a_{ij}= 0$ for any $i > \tilde i _j$. 
%Suppose $i < i '$ and $v_p (a_{i j}) \leq v_p (a_{i ' j})$. Then one can change basis of $M$ so that $a_{i'j}= 0$. 	
\end{lemma}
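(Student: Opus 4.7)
The plan is to find a lower-triangular $X \in \GL_3(\cO)$ such that $A' := X A \widetilde X^{-1}$, where $\widetilde X := \Lambda X \Lambda^{-1}$ so that $X * A\Lambda = A' \Lambda$, satisfies $A'_{ij} = 0$ for every $i > \tilde i_j$. Writing $X = I + N$ with $N$ strictly lower triangular, one has $\widetilde X = I + \widetilde N$ with $\widetilde N_{ij} = (\lambda_i/\lambda_j)\, N_{ij}$ for $i > j$, and each such ratio lies in $p\cO$ since $\Lambda = [1,p^r,p^s]$ with $0 < r < s$.

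The first step is to expand $A' = X A \widetilde X^{-1}$ entry-by-entry. The column correction contributed by $\widetilde X^{-1}$ and all higher-order products in the entries of $N$ carry at least one extra power of $p$; consequently, for each $i > \tilde i_j$,
\[
A'_{ij} \;\equiv\; a_{ij} + \sum_{k < i} N_{ik}\, a_{kj} \pmod{p\cO}.
\]
The leading coefficient on the unknown $N_{i,\tilde i_j}$ is the unit $a_{\tilde i_j, j}$, so a short case analysis on $\tilde i_j \in \{1,2,3\}$ (setting any free entry of $N$, such as $N_{32}$ when $\tilde i_j = 1$, to zero) shows that this mod-$p$ system is triangularly and uniquely solvable for the required entries of $N$.

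The next step is a Hensel / contraction-mapping argument: the full system of equations $A'_{ij} = 0$ for $i > \tilde i_j$ is a $p$-adic perturbation of its linear part, so the associated fixed-point map $N \mapsto -\,(\text{linear}(N))^{-1}(a_{\bullet j} + \text{perturbation})$ is a contraction on a suitable closed ball in $p\cO$, and by completeness of $\cO$ has a unique fixed point. The resulting $X = I + N$ then lies in $\GL_3(\cO)$ by construction and realizes the desired change of basis.

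The main obstacle is the case $j = \tilde i_j$: here the right-column action by $\widetilde X^{-1}$ acts on the very column being cleaned, producing quadratic-in-$N$ feedback into the entries one is trying to kill, so a single elementary row operation of the form $R_{\tilde i_j, i}(-a_{ij}/a_{\tilde i_j, j})$ does not exactly vanish the $(i,j)$-entry of $A'$. What makes the argument go through is precisely that every such feedback term carries the twist factor $\lambda_i/\lambda_{\tilde i_j} \in p\cO$, keeping the perturbation of the linear system $p$-adically small and preserving the contraction property.
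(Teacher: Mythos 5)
Your proof is correct but takes a genuinely different route from the paper's. The paper argues by an explicit infinite product of elementary row operations: at each step it applies a single $R_{\tilde i_j\, k}(-a_{kj}/a_{\tilde i_j j})$ to the worst-offending entry, tracks the two bookkeeping quantities $m_j$ (minimal valuation of the entries to kill) and $l_j$ (multiplicity at that minimum), and shows that the companion column operation strictly improves $(m_j,l_j)$ lexicographically because the $k$-th column carries an extra factor $p^{r}$ or $p^{s}$. Iterating and taking the limit forces the offending entries to vanish. You instead reformulate the problem as a single polynomial system in the entries of the unipotent $X = I + N$: writing $A' = XA\widetilde X^{-1}$ with $\widetilde X = \Lambda X\Lambda^{-1}$, you observe that all terms involving $\widetilde N$ carry a factor $\lambda_i/\lambda_j \in p\cO$, so modulo $p$ the system linearizes to a triangular system with unit pivot $a_{\tilde i_j, j}$ (and constant term $a_{ij} \in p\cO$, since $i>\tilde i_j$), which Hensel's lemma / the contraction-mapping theorem then lifts to an exact solution $N \in p\,\mathrm{M}_3(\cO)$. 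Both proofs hinge on exactly the same mechanism — the diagonal twist by $\Lambda$ makes the feedback from the column operation $p$-adically small — but yours packages the iteration into a fixed-point argument rather than an explicit sequence of elementary matrices, which is shorter and more systematic; the paper's version has the advantage of being fully hands-on and of yielding the explicit elementary operations, which it in fact reuses in the case analysis immediately following the lemma.
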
 
\begin{proof} Pick $m _j=  \min \{ v_p (a_{ij})|  i > \tilde i_{j}\}$ and $l_j= \# \{k | k > \tilde i _j, v_p (a_{kj}) = m_j\}$. We construct a sequence a basis so that $l_j$ drops to $0$ and then  $m_j$ can go to $+ \infty$.  Pick $a_{k j}$ so that $v_p (a_{kj})= m _j> 0 $. %By the construction, $v_p (a_{kj})  > 0$. %We construct a sequence a basis so that $v_p (a_{kj })$ increases. 
	Consider the matrix $X= R_{\tilde i _j k} (-  \frac {a_{kj}}{a_{\tilde i_j j}})$.   
	By definition, $k > \tilde i_j$. So $X$ is low-triangular as required. Also $X A \Lambda$ makes $kj$-th entry $0$. On the other hand,  $X* (A \Lambda) = XA\Lambda X^{-1}$ will add  $\frac {a_{kj}}{a_{\tilde i_j j}}$-multiple of $k$-th column back $\tilde i _j$-th column. Unless $\tilde i_j =j$, the $j$-th column of $XA\Lambda X^{-1}$ is the same as that of  $X A \Lambda$. %In this case, $v_p (a_{kj})$ incerases.
		 If $\tilde i_j =j$ then we see that $j$-th column of $XA\Lambda X^{-1}$  is the $j$-th column of adding  $\frac {a_{kj}}{a_{\tilde i_j j}}$-multiple of $k$-th column of $XA\Lambda$. Write $X*(A \Lambda)= A' \Lambda $ with $A'= (a_{ij}')_{3 \times 3}$. 
		 Since   $v_p (\frac {a_{kj}}{a_{\tilde i_j j}}) =m_j$ and $k$-th column of $XA\Lambda$ has factor $p ^{k}$, and note $k > j$, we conclude that $a'_{\tilde i _j j}$ is still a unit, $v_p (a'_{ij})> m_j$ if $v_p (a_{ij})> m_j$ and $v_p (a'_{kj}) \geq m _j + k-\tilde i _j > m _j$.  Therefore, if we replace $A\Lambda$ by $X*(A\Lambda)$ then we see that $l_j$ decrease at least 1 to $l_j -1$. This completes the proof. 
		 %still see that $ \min \{ v_p (a_{ij})|  i > \tilde i_{j}\}$ for $ X* A\Lambda$ increases, and $kj$-th entry is replaced by an element with larger valuation (at least $k -\tilde i _j$). In summary, for $X*A\Lambda$, $\min \{ v_p (a_{ij})|  i > \tilde i_{j}\}$ increases (may not strictly), $a_{\tilde i , j}$ remains unit and $a_{kl}$ is replaced either by zero or an element with larger evaluation  (at least $k -\tilde i _j$). So after repeating  all this steps, we can selects a series of basis, which will converges to a new basis so that    $ a_{ij}= 0$ for any $i > \tilde i _j$.   	
\end{proof}

Now by the above Lemma to our case, we immediately see $v_p (a_{11})> 0$. Otherwise, after change a basis,  $a_{21}=a _{31}= 0$. Then $A \Lambda$ represents a reducible representation. Contradiction. 

Now assume that $a_{21}\in \cO ^\times$. We claim that $v_p(a_{12}) > 0$. Otherwise, we claim that after change a basis $a_{31} = a_{32}= 0$ so that $V$ is reducible and contradiction. Now assume that $a_{12} \in \cO ^\times$, we can use the same trick as the above Lemma: Let $ h = \min \{ v_p (a_{31}), v_p (a_{32})\}$. 
 Taking $X= R_{23} (- \frac{a_{31}}{a_{21}}) R_{13} (-\frac{a_{32}}{a_{12}})$, consider $X* A \Lambda : = A' \Lambda$ with $A' = (a'_{ij})_{3\times 3}$. It is not hard to check that 
 $a'_{21}$ and $a'_{12}$ remain unit and $\min\{v_p (a'_{31}), v_p (a'_{32})\}\geq h + 1$. 
 %then $X A \Lambda$ kills $a_{31}, a_{32}$, but $XA\Lambda X^{-1}$ sends $3$-rd columns back with strictly larger valuations in $31$, $32$-entries, and keeps valuations for entries whose are units. 
 So we keep this process and obtain a basis so that    $a_{31} = a_{32}= 0$, and contradicts that $V$ is irreducible. This concludes that $v_p (a_{12})> 0$. So $a_{13}$ must be a unit because $A\in \GL _3 (\cO). $

  In summary, if $a_{21} \in \cO ^\times$ then so is $a_{13}$ and both $a_{11}$ and $a_{12}$ are not units. Using $X= R_{12} (-\frac {a_{23}}{a_{13}}) R_{13} (- \frac{a_{33}}{a_{13}})$, by replacing $A\Lambda$ by$ X*A \Lambda$,  $A$ has the shape 
  $A= \begin{pmatrix}
a_{11} & a_{12} & a_{13} \\ a_{21} & a_{22}  & 0 \\ a_{31} & a_{32} & 0 
  \end{pmatrix}$, 
  where  $a_{21}$ and $a_{13}$  are still units,  and both $a_{11}$ and $a_{12}$ are not. Now using $X= R_{23} (-\frac{a_{31}}{a_{21}})$, and replacing $A\Lambda$ by $X* A\Lambda$, we have $A= \begin{pmatrix}
  	a_{11} & a_{12} & a_{13} \\ a_{21} & a_{22}  & 0 \\ 0 & a_{32} & 0 
  \end{pmatrix}$, where  $a_{21}$ and $a_{13}$  are still units,  and both $a_{11}$ and $a_{12}$ are not. This forces $a_{32}\in \cO^\times$. Now there are two cases: 
  
  Case I: $v_p(a_{22})> 0$. We have   $a_{21}$, $a_{23}$ $a_{13}$  are still units, but   $a_{11}$,  $a_{12}$ and $a_{22}$ are not. 
  
  Case II $a_{22} \in \cO ^\times$. In this case, we use $X= R_{23} (- \frac{a_{32}}{a_{22}})$ to kill $a_{32}$ entry, but since $a_{21}, a_{22}, a_{32}$ are all units, we see that 31-entry of $X* A\Lambda$ returns a unit. By replacing $A\Lambda$ by $X* A \Lambda$, we have 
   $A= \begin{pmatrix}
   a_{11} & a_{12} & a_{13} \\ a_{21} & a_{22}  & 0 \\ a_{31} & 0 & 0 
   \end{pmatrix}$
  where $a_{21}, a_{31}, a_{22}$ and $a_{13}$ are units, but $a_{11}$ and $a_{12}$ are not. 
  The above discussion finishes the case that $a_{21}\in \cO^\times$. 
  
  Now let us assume that $a_{31}\in \cO ^\times$ but $a_{11}, a_{21}$ are not. Since $A\mod \varpi$ is still invertible, this forces that either $a_{13}$ or $a_{23}$ are units.

   Case II':  $a_{13} \in \cO ^\times$. In this case, we can use the above situation when $a_{13}\in \cO ^\times$.  Using $X= R_{12} (-\frac {a_{23}}{a_{13}}) R_{13} (- \frac{a_{33}}{a_{13}})$, by replacing $A\Lambda$ by$ X*A \Lambda$, then 
   $A= \begin{pmatrix}
   a_{11} & a_{12} & a_{13} \\ a_{21} & a_{22}  & 0 \\ a_{31} & a_{32} & 0 
   \end{pmatrix}$, 
   where  $a_{31}$ and $a_{13}$  are still units,  and both $a_{11}$ and $a_{21}$ are not. This forces that $a_{22}\in \cO^\times$.  Using $X= R_{23} (- \frac{a_{32}}{a_{22}})$ to kill $a_{32}$ entry, and now $A$ becomes 
    $A= \begin{pmatrix}
  a_{11} & a_{12} & a_{13} \\ a_{21} & a_{22}  & 0 \\ a_{31} & 0 & 0 
  \end{pmatrix}$ where $a_{31}, a_{22}, a_{13}$ are units, $a_{11}$, $a_{21}$ are not. 
  
Case III:  $a_{13}$ is not unit.  Then this forces  $a_{23}$ and $a_{12}$ are units. Now using $X = R_{23} (-\frac {a_{33}}{a_{23}})$ to kills $33$-entry, we have new   $A= \begin{pmatrix}
a_{11} & a_{12} & a_{13} \\ a_{21} & a_{22}  & a_{23} \\ a_{31} & a_{32} & 0 
\end{pmatrix}$ where $a_{31}$,  $a_{12}$ and $a_{23}$ are still units, and $a_{11}, a_{21}$ are not. Now using  $X= R_{13}(-\frac{a_{32}}{a_{12}}) R_{12} (- \frac{a_{22}}{a_{12}})$ to kills $22$, $32$-entries, we obtains
  $$A= \begin{pmatrix}
a_{11} & a_{12} & a_{13} \\ a_{21} & 0   & a_{23} \\ a_{31} & 0 & 0 
\end{pmatrix}$$
where $a_{31}, a_{12}$ and $a_{23}$ are units, but $a_{11}, a_{21}$ and $a_{13}$ are not.

Now we claim that Case II and Case II' can be realized in the same type in the following sense: For case II, $v_p (a_{12})> 0$, by enlarging $F$ if necessary (enlarging $F$ will not affect computing reduction), we may assume that $\varpi ^2 | a_{12}$. Now consider $S_2 (\varpi)* A \Lambda$, we can see then both $a_{12}$ and $a_{21}$ not units. Similarly trick can be applied to case II' so that to make both $a_{12}$ and $a_{21}$ to be non-units. So we may put case II and II' into one type  whose has matrix 
 $A= \begin{pmatrix}
a_{11} & a_{12} & a_{13} \\ a_{21} & a_{22}  & 0 \\ a_{31} & 0 & 0 
\end{pmatrix}$ where $a_{31}, a_{22}, a_{13}$ are units, $a_{11}$, $a_{12}$,  $a_{21}$ are not. Note in this type $a_{12} a_{21} \not = 0$. Indeed, if $a_{21}= 0$ (resp. $a_{12}= 0$) then $e_1 , e_3$  (resp. $e_2$) generates weakly admissible filtered submodule of $D$. This contradicts that $V$ is irreducible.

By enlarging $F$ so that $F$ contains the cubic roots of $\det(A)$,  We can replace  $A $ by  $\alpha A$ so that  $\alpha \in F$ and $\det (A) =1$. For case I, consider $ S_1 ((a_{13} a_{32})^{-1}) S_3 (a_{32}^{-1}) *A\Lambda$, $A$ is simplified to following shape: 
  $A= \alpha \begin{pmatrix}
a & b & 1 \\ 1 & c  & 0 \\ 0 & 1 & 0 
\end{pmatrix}, \alpha \in \cO^\times, a, \ b ,\ c  \in \varpi \cO. $ 
The same trick applies to case III, and we get 
  $A= \alpha \begin{pmatrix}
a & 1 & b  \\ c & 0   & 1 \\ 1 & 0  & 0 
\end{pmatrix}, \alpha \in \cO^\times, a, \ b ,\ c  \in \varpi \cO $. For case II, we consider $S_1 (a_{13}^{-1})\Lambda$, then $A$ is simplified to 
 $A= \alpha \begin{pmatrix}
a & b  & 1 \\ c & \mu   & 0 \\ -\mu^{-1} & 0  & 0 
\end{pmatrix}, \  \alpha,  \mu \in \cO^\times,  \ a, b,  c\in \varpi \cO.$

Finally we call matrices in case I type I,  and case III Type II, but case II type III because type I and type II are more essential in compute the reduction of $V$. We summarize our discussion in this subsection to the following theorem. 
\begin{thm}\label{them-classification-SD-lattices} To compute (the semi-simplification of) reduction of irreducible 3-dimensional crystalline representations of $G_{\Q_p}$ with Hodge-Tate weights $\{ 0 , r , s\}$ satisfying $0 < r < s$, it suffices to consider the representations whose $D^*_{\cris} (V)$ containing strongly divisible lattices given by the following 3 types of matrices $A \Lambda$,  where $\Lambda = [1, p ^ r , p ^s]$ and $A$ has the following types: 
	$$ \begin{pmatrix}
	a & b & 1 \\ 1 & c  & 0 \\ 0 & 1 & 0 
	\end{pmatrix},  a,  b , c  \in \varpi \cO;     \begin{pmatrix}
	a & 1 & c \\ b & 0   & 1 \\ 1 & 0  & 0 
	\end{pmatrix},  a,  b , c  \in \varpi \cO;    	
	\begin{pmatrix}
	a & b  & 1 \\ c & \mu   & 0 \\ -\mu^{-1} & 0  & 0 
	\end{pmatrix},   \mu \in \cO^\times,  \ a, b,  c\in \varpi \cO, bc\not = 0. 
	$$
\end{thm}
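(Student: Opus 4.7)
The plan is to systematically simplify the matrix $A$ using change-of-basis operations that preserve the filtration, invoking irreducibility of $V$ repeatedly to rule out configurations that would produce a proper $\varphi$-stable weakly admissible sub-object. After fixing a basis $e_1, e_2, e_3$ of $M$ with $e_2 \in \Fil^r M$ and $e_3 \in \Fil^s M$, so that $\varphi(e_1,e_2,e_3) = (e_1,e_2,e_3)A\Lambda$ for some $A \in \GL_3(\cO)$, any admissible change of basis corresponds to a lower triangular $X$, combined with diagonal scalings $S_i(c)$ (which amount to rescaling the lattice, possibly after enlarging $F$).

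The first step is to apply Lemma \ref{lem-eliminate} column by column. Examining column $1$: if $a_{11} \in \cO^\times$, the lemma would kill $a_{21}$ and $a_{31}$, making $\langle e_2, e_3\rangle$ a proper weakly admissible $\varphi$-stable sub-module — so $v_p(a_{11}) > 0$. Irreducibility then forces at least one of $a_{21}, a_{31}$ to be a unit. A parallel analysis on column $2$ in the case $a_{21}\in \cO^\times$ forces $v_p(a_{12}) > 0$ (otherwise iteratively eliminating along columns $1$ and $2$ makes row $3$ vanish in the first two columns, giving reducibility), which in turn forces $a_{13} \in \cO^\times$ since $A$ is invertible modulo $\varpi$.

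Next, split into cases on which of $a_{21}, a_{31}$ is a unit, subdividing each on whether a further specific entry ($a_{22}$ in the first case, $a_{13}$ in the second) is a unit. Using the row operations $R_{12}, R_{13}, R_{23}$ supplied by the elimination procedure, successively clear the remaining sub-diagonal entries in columns $3$, $1$, $2$ in a carefully chosen order so that each clearing does not spoil the previously normalized columns. The main technical obstacle is the bookkeeping: after each operation one must verify that the entries previously declared units remain units, and that the entries previously declared non-units remain non-units. A key unifying observation is that after allowing $S_2(\varpi)$ — which is legitimate upon enlarging $F$, since this does not change the reduction — Case II and Case II$'$ collapse into a single type.

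Finally, enlarge $F$ to contain a cube root of $\det A$ and apply the scalings $S_1, S_3$ (together with $S_2$ in Type III) to normalize the unit entries to $1$ (respectively $\mu^{\pm 1}$), producing the three matrix shapes in the statement. The last wrinkle is verifying that $bc \neq 0$ in Type III: if $b = 0$ then $\langle e_2 \rangle$ is weakly admissible and $\varphi$-stable, while if $c = 0$ then $\langle e_1, e_3 \rangle$ is, and either would contradict irreducibility. The hard part of the whole argument is not any single manipulation but the combinatorial control: carefully ordering the elementary operations so that irreducibility can be converted, at each branch, into a unit/non-unit constraint on a specific matrix entry.
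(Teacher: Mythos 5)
Your proposal is correct and follows essentially the same route as the paper: eliminate below the pivot in each column via Lemma~\ref{lem-eliminate}, use irreducibility to force $v_p(a_{11})>0$ and to forbid any configuration yielding a $\varphi$-stable weakly admissible sub-object, branch on whether $a_{21}$ or $a_{31}$ is a unit (then on $a_{22}$, resp.\ $a_{13}$), merge Case~II and Case~II$'$ after an $S_2(\varpi)$-scaling, normalize by a cube root of $\det A$, and check $bc\neq 0$ for Type~III. The only slight misstatement is attributing the fact that at least one of $a_{21},a_{31}$ is a unit to irreducibility; that follows simply from $A\in\GL_3(\cO)$ (the first column of $A\bmod\varpi$ cannot vanish), whereas irreducibility is what rules out $a_{11}$ being a unit.
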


\subsection{Relations between matrices of different types.}\label{subsec-relations}

Indeed, the type I and type II are dual to each other up to twisting characters. Let $V$ be given by type II and $V ^* : = \Hom_{F} (V , F)$ be the dual of $V$. Then $V^*(s)$ is also crystalline with Hodge-Tate weights $\{ 0 , s-r, s\}$.  It is standard to compute that the corresponding  strongly divisible lattice in $ V^* (s)$ is given by $ (A^{T}) ^{-1} p ^s \Lambda ^{-1}$. After switching basis so that the matrix has formation $A' [1, p ^{s-r}, p ^s]$, we have $A': = \begin{pmatrix}
	-b   & bc -a  & 1  \\ 1 & -c   & 0  \\ 0 & 1  & 0 
	\end{pmatrix}. $
And this is actually type I representation. 	Finally, we remark the generic condition $2 \leq r \leq p-2, 2+p \leq s \leq r + p-2$ is stable under the dual. Namely, if $r, s $ satisfies this condition, then so does the Hodge-Tate weights $\{0 , s-r, s\}$ of $V^*(s)$. 

\subsubsection{${\rm{III}} (a, b, c)= {\rm{I}} (a, bc - p ^{s-r} \mu ^{-1}, \mu )$}\label{rmk-III-I} Type III can be transferred to type I: Since $c\not = 0$, we have 
	 $\begin{pmatrix} 1 & 0 & 0 \\ 0 & c ^{-1} & 0 \\ 0 & 0 & 1\end{pmatrix}*_\varphi A= 	\begin{pmatrix}
	 a & bc  & 1 \\ 1 & \mu   & 0 \\ -\mu^{-1} & 0  & 0 
	 \end{pmatrix}: = A' $. Then $ \begin{pmatrix} 1 & 0 & 0 \\ 0 & 1 & 0 \\ 0 & \mu ^{-1} & 1\end{pmatrix}*_\varphi A'\Lambda= 	\begin{pmatrix}
	 a & bc - p^{s-r}\mu^{-1}  & 1 \\ 1 & \mu   & 0 \\ 0 & 1  & 0 
	 \end{pmatrix}\Lambda $. Note that both $v_p (a), v_p (bc - p ^{s-r}\mu ^{-1}) >0 $. So Type III can be regarded as Type I by allowing $c \in \cO^\times$.   	
\subsubsection{${\rm I}(a, b , c) = {\rm II} (a_2 , b_2 , c_2) $}\label{subsubsec-I to II}
Suppose we are give type I matrix 
$A\Lambda= \begin{pmatrix} a & b p ^r & p ^s \\ 1 & c p ^r & 0 \\ 0 & p ^r & 0 \end{pmatrix}.$ Let us assume that $c\not = 0 $ and $b + c^{-1}p ^{s-r}\not = 0$. 
Then $\begin{pmatrix}
	1 & 0 & 0 \\ 0 & 1 & 0 \\ 0 & 0 & c
\end{pmatrix} *_\varphi (A\Lambda) =  \begin{pmatrix} a & b p ^r & c ^{-1}p ^s \\ 1 & c p ^r & 0 \\ 0 & c p ^r & 0 \end{pmatrix}$. By 
$\begin{pmatrix}
	1 & 0 & 0 \\ 0 & 1 & 0 \\ 0 & -1 & 1
\end{pmatrix} *_\varphi$, we obtain $ \begin{pmatrix} a & (b + c^{-1} p ^{s-r})  p ^r & c ^{-1}p ^s \\ 1 & c p ^r & 0 \\ -1 & 0 & 0 \end{pmatrix}$. Using 
$\begin{pmatrix}
	1 & 0 & 0 \\ 0 & (b+ c^{-1}p ^{s-r})  & 0 \\ 0 & 0 & 1
\end{pmatrix} *_\varphi$, we obtain $ \begin{pmatrix} a &  p ^r & c ^{-1}p ^s \\ (b + c^{-1} p ^{s-r})  & c p ^r & 0 \\ -1 & 0 & 0 \end{pmatrix}.$ Finally by  
$\begin{pmatrix}
	1 & 0 & 0 \\ - c  & 1   & 0 \\ 0 & 0 & -1
\end{pmatrix} *_\varphi$,  we obtain $$ \begin{pmatrix} a + cp ^r &  p ^r & c ^{-1}p ^s \\ (b -ac+  c^{-1} p ^{s-r})  & 0  & p ^ s\\ 1 & 0 & 0 \end{pmatrix}.$$ 
Hence ${\rm I}(a, b , c )= {\rm II }(a_2, b_2, c_2)$ with $a_2 = a + c p ^r$ , $b_2 = (b -ac+  c^{-1} p ^{s-r}),$ and $c_2 = c^{-1}$. Note here we change basis by rational coefficients. 

Combine the above result, since $bc\not = 0$,  ${\rm III} (a, b , c)= {\rm II} (a+ \mu p ^r, b c - a \mu, \mu^{-1})$. 
\section{Algorithm and Validations}
In this section, we establish an algorithm to compute $\overline \M: = \M \mod \varpi$ for a Kisin module $\M$ over $\fS:=  \cO_F [\![u]\!]$ 
attached to crystalline representation $V$ in this paper. For the background of Kisin module $\M$ attached to crystalline representations, please see \cite[\S 2]{BLL-semistable}. The following subsection will make a summary for that  paper.

%And we will use results \emph{loc. cit.} freely.  We also use notations there except the following change:  
\subsection{Summary from \cite{BLL-semistable}}\label{subsec-summary}Let $S = \cO [\![u, \frac{E^p}{p}]\!]$ and $\nu : = \frac {\varphi(E)}{p}$\footnote{Note the notation $\nu$ is different from that in \cite{BLL-semistable} where use $\fc$.}. Note that $\nu\in S ^\times$. Let $V$ be a semistable representation $V$ with Hodge-Tate weights in $[0, \dots, h]$ and $G_K$-stable $\cO_F$-lattice $T\subset V$. Let $\M : = \M (T)$ be the Kisin module attached to $T$. Then \cite[Thm. 2.7]{BLL-semistable} provides a method to compute the matrix of $\varphi$  on $\cM : = S [\frac 1 p] \otimes _{\fS} \M$  via $D^*_{\st}  (V)$. In the case that  $K = \Q_p$ and $V$ is crystalline,  the method is  explicitly explained in  Example 2.9 in \cite{BLL-semistable}:   By \cite{Laffaille}, $D= D_{\st}^{\ast}(V)= D_{\cris}^*(V)$ admits a strongly divisible lattice  $(M , \Fil ^i M, \varphi _i)$. More precisely, there exists an $F$-basis $(e_1, \dots , e_d)$ of $D$ and integers $0 =n_0 \leq n_1 \leq \cdots \leq n_h \leq d$ such that $\Fil ^ i D : = \bigoplus_{j\geq n_i } F e_j $, and $\varphi (e_1,\dots , e_d) = (e_1, \dots , e_d) X P$ where $X \in \GL_d (\cO)$ and $P$ is a diagonal matrix whose $ii$-th entry is $p^{s_i}$ where $s_i= \max\{j \mid n_j \leq i\}= \max \{ j \mid e_i \in \Fil ^j D\}$. Then there exists an $S[\frac 1 p]$-basis of $\cM $ so that the matrix $\cA $ of $\varphi$ on $\cM$ is given by \begin{equation}\label{eqn-compute-phi}
\cA=  \Lambda X C, \text{ where }  \  \Lambda = [E^{s_1}, \dots , E^{s_d}], \  C = [\nu ^{-s_1}, \dots ,\nu ^{ - s_d} ]. 
\end{equation} %where $\Lambda$ is a diagonal matrix with $(i,i)$-th entry is $E^{s_i}$ and $C$ is a diagonal matrix with $(i,i)$-th entry is $\nu^{-s_i}$.  

 In the next subsection, we will use Equation \eqref{eqn-compute-phi} to compute $\cA$ for strongly divisible lattices in Theorem \ref{them-classification-SD-lattices}.  Let $R : = \cO[\![ u,  \frac{\varpi E}{p}]\!]$. Let $\cS$ be any ring satisfying $\fS \subset \cS \subset F [\![u]\!]$ and $h \geq 0$, we use $$\rM^h_d (\cS): = \{ A \in \rM_d (\cS)| \exists B \in \rM _d (\cS), AB = BA = E^h I_d\}. $$ In the remaining of this paper, we construct a sequence matrices $X_n\in \GL_3 (R[\frac 1 p])$ so that $Y : = \prod\limits_{n =1}^\infty X_n$ converges in $R [\frac 1 p]$ and $ \fA : = Y*_\varphi \cA \in \rM _d ^h (\fS)$. 
 This is equivalent to that we will find another basis $e_1 , \dots, e_d $ of $R [\frac 1 p] \otimes \cM$ so that the $\fS$-submodule $\M '$ generated by $\{e_1, \dots , e_d\}$ is a Kisin module of height $h$. By \cite[Prop. 2.1]{BLL-semistable}, $\M'= \M(T')$ for $T'\subset V$ a $G_\infty$-stable $\cO_F$-lattice\footnote{We do not know if $T' = T$.}. In particular, let $\overline \M' : = \M' / \varpi \M'$ and $\overline V$ be the \emph{semi-simplification} of the reduction $ T / \varpi T$ then $\overline V$ is uniquely computed by $V^*_{\F} (\overline \M'[\frac 1 u])$ by \cite[Cor. 2.3.2]{BergdallLevin-BLZ}.   We will return the discussion of $V ^*_{\F}$ in more details in \S \ref{sec-Galoisrep}. 
\subsection{Initial matrices of Frobenius} 
By the discussion around equation \eqref{eqn-compute-phi}, we see that $$\cA =[1, E ^r , E ^s] A [1, \nu^{-r}, \nu^{-s} ]$$ with  matrices $A$ in Theorem \ref{them-classification-SD-lattices}. 

We need to simplify  $\cA$ so that $\det(\cA) = E^{r+ s}$. 
Set $z: =  \prod\limits_{i= 0}^{\infty} \varphi^{3i}(\nu^{r}\varphi^2 (\nu ^s)) $, $y = \varphi (\nu^s) \varphi ^2(z) $, and $x = \nu^s \varphi (z)$. Note that all $x,  y, z \in S ^\times$ as $\nu \in S^\times$. Set $X= [x, y, z ]  \in \GL_3 (S)$, we have $X *_\varphi$ type I matrix: 
 \begin{equation}\label{eqn-A}
 \cA= \begin{pmatrix}
 a x\varphi (x^{-1}) & b \nu ^{-r} x \varphi (y ^{-1}) & 1 \\  E^r & c E^{r} \nu^{-r}y \varphi (y ^{-1})  & 0 \\ 0 & E^s & 0 
 \end{pmatrix}
 \end{equation}
 Similarly, for type II matrix $A$, we may use $X*_\varphi [1, E ^r , E ^s] A [1, \nu^{-r}, \nu^{-s} ]$ with $X= [x' , y' , z' ]$ where $x'=\prod\limits_{i= 0}^{\infty} \varphi^{3i}(\nu^{r}\varphi (\nu ^s)) $, $y' = \nu ^s \varphi ^2 (x')$ and $z' = \varphi (x')$.  We obtain:  
  \begin{equation}\label{eqn-A-II}
 \cA= \begin{pmatrix}
 a x'\varphi ({x'}^{-1}) & 1 & c \nu^{-s} x'  \varphi({z'}^{-1}) \\  b E^r y ' \varphi  ({x'}^{-1}) &  0   & E ^r  \\ E^s & 0& 0 
 \end{pmatrix}
 \end{equation}
\begin{comment}
For type III, we set  $\cA : = [ \tilde x , \tilde y , \tilde z ]* _\varphi ([1 , E^r, E ^s ] A [1 , \nu ^{-r}, \nu ^{-s}]) $ where $\tilde x : =\prod\limits_{i = 0}^\infty \varphi ^{2i} (\nu ^s) $ , $\tilde y = \prod\limits_{i = 0 }^\infty \varphi^i (\nu^r)$ and $\tilde z = \varphi (\tilde x)$, we obtain 
\begin{equation}\label{eqn-A-III}
	\begin{pmatrix}
	a \tilde x \varphi (\tilde x^{-1}) & b \nu^{-r} \tilde x \varphi (\tilde y ^{-1})  & 1 \\ c E^r \tilde y \varphi (\tilde x^{-1}) & \mu E^r  & 0 \\ -\mu^{-1} E^s & 0  & 0 
\end{pmatrix}
\end{equation}
 
	content...
\end{comment}

 Write $\gamma_i (E): = (\frac{E^p} p )^i. $For any $w\in S$, we can uniquely write $w =  \sum \limits_{i = 0}^\infty \fw_i \gamma_i (E) $ with $\fw_i\in \cO[u]$ with maximal degree $p -1$. %In this following lemma, we  looking at properties $\fw_i$ for $i \leq 1$.  
We have the following estimate: 
\begin{lemma}\label{lem-estimate}
\begin{enumerate}
	\item $x\varphi (x^{-1})= \sum \limits_{i = 0}^\infty \fa_i \gamma_i (E)  $ with $\fa_0 \equiv 1 \mod p$ and $\fa_1 \equiv s \mod p$.  
	\item $\nu ^{-r} x \varphi (y ^{-1}) = \nu^{s-r}\varphi (z y^{-1})=  \sum \limits_{i = 0}^\infty \fb_i\gamma_i (E) $ with $\fb_0 \equiv 1 \mod p$ and $\fb_1 \equiv s-r \mod p$.
	\item $ \nu^{-r}y \varphi (y ^{-1}) =  \sum \limits_{i = 0}^\infty \fc_i \gamma_i (E) $ with $\fc_0 \equiv 1 \mod p$. 
		\item $x'\varphi ({x'}^{-1})= \sum \limits_{i = 0}^\infty \fa'_i \gamma_i (E)  $ with $\fa'_0 \equiv 1 \mod p$ and $\fa'_1 \equiv r \mod p$.
		\item $  \nu ^{-s} x'  \varphi({z'}^{-1}) = \nu^{r- s }\varphi (y' {z'}^{-1})=  \sum \limits_{i = 0}^\infty \fb'_i\gamma_i (E) $ with $\fb_0 \equiv 1 \mod p$ and $\fb'_1 \equiv r-s \mod p$.
	\item $ y ' \varphi  ({x'}^{-1})  =  \sum \limits_{i = 0}^\infty \fc'_i \gamma_i (E) $ with $\fc'_0 \equiv 1 \mod p$. 
%	\item $ \tilde x \varphi (\tilde x^{-1}) = \sum \limits _{i = 0} ^\infty  \tilde \fa_i  (\frac{E^p}{p} ) ^i $	with $\tilde \fa_0 \equiv 1 \mod p$ and $\tilde \fa_1\equiv s \mod p $
%	\item $ \nu^{-r} \tilde x \varphi (\tilde y ^{-1}) = \sum \limits _{i = 0}^\infty \tilde \fb_i  (\frac{E^p}{p} ) ^i $ with $\tilde \fb_0 \equiv 1 \mod p$ and $\tilde \fb_1 \equiv s-r \mod p$.  
\end{enumerate}

\end{lemma}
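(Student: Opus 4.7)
The proof rests on two structural facts modulo $p$. First, expanding $\varphi(E) = (u+p)^p = u^p + \sum_{k=1}^{p}\binom{p}{k}p^k u^{p-k}$ and observing that each cross-term has $p$-adic valuation at least $k+1 \ge 2$ (since $p \mid \binom{p}{k}$ for $1 \le k \le p-1$), we obtain $u^p \equiv p\gamma_1(E) \pmod{p^2 S}$, hence
\begin{equation*}
u^p/p \equiv \gamma_1(E) \pmod{p}, \qquad \nu = 1 + u^p/p \equiv 1 + \gamma_1(E) \pmod p.
\end{equation*}
Since $u^p \in pS$, we have $S/p \cong (\F_p[u]/u^p)[\![\gamma_1(E)]\!]$, so the normal-form expansion $w = \sum_i \mathfrak{w}_i \gamma_i(E)$ exists uniquely modulo $p$ and its coefficients can be read off directly. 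Second, in $S/p$ both $\varphi(u) = u^p$ and $\varphi(\gamma_1(E)) = p^{p-1}\nu^p$ vanish, so $\varphi$ kills the maximal ideal modulo $p$. In particular, $\varphi^k(\nu^{\pm m}) \equiv 1 \pmod p$ for every $k \ge 1$ and every $m \in \Z$.

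Next, the telescoping identities $z = \nu^r \varphi^2(\nu^s)\varphi^3(z)$ and $x' = \nu^r\varphi(\nu^s)\varphi^3(x')$ show that $\varphi^k(z)$ and $\varphi^k(x')$ (for $k \ge 1$) are convergent infinite products every factor of which has the form $\varphi^{\ell}(\nu^{\pm m})$ with $\ell \ge 1$; by the previous paragraph each such factor is $\equiv 1 \pmod p$, so $\varphi^k(z) \equiv \varphi^k(x') \equiv 1 \pmod p$ whenever $k \ge 1$. The same holds for $\varphi^k(y)$ and $\varphi^k(y')$ with $k \ge 1$, since $y = \varphi(\nu^s)\varphi^2(z)$ and $y' = \nu^s\varphi^2(x')$ become, after one more application of $\varphi$, products of Frobenius-iterated $\nu$'s only.

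Now (1)--(6) reduce to one-line computations. For (1),
\begin{equation*}
x\varphi(x^{-1}) = \nu^s \varphi(z)\cdot\varphi(\nu)^{-s}\varphi^2(z^{-1}) \equiv \nu^s \pmod{p},
\end{equation*}
and the binomial theorem together with $\gamma_1(E)^k = \gamma_k(E)$ gives $\nu^s \equiv \sum_k \binom{s}{k}\gamma_k(E) \pmod{p}$, already in normal form, so $\mathfrak{a}_0 \equiv 1$ and $\mathfrak{a}_1 \equiv s \pmod p$. Exactly the same pattern gives $\nu^{-r}x\varphi(y^{-1}) = \nu^{s-r}\varphi(z)\varphi^2(\nu^{-s})\varphi^3(z^{-1}) \equiv \nu^{s-r} \pmod p$ for (2); $\nu^{-r}y\varphi(y^{-1}) \equiv \nu^{-r} \pmod p$ for (3); and (4)--(6) with $x'$ in place of $x$ and leading $\nu$-exponents $r$, $r-s$, and $s$ respectively.

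The only mildly delicate point is the structural claim in the first paragraph that $S/p \cong (\F_p[u]/u^p)[\![\gamma_1(E)]\!]$ and the induced uniqueness of normal forms; once this is in hand, everything reduces to the observation that iterated Frobenius kills $\nu - 1$ modulo $p$, so in every product exactly one factor — the leading, un-iterated $\nu$-power — survives the reduction and supplies both the constant term and the linear coefficient via the binomial expansion.
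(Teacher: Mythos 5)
Your proof is correct in substance and follows essentially the same route as the paper: reduce $x\varphi(x^{-1})$ (and its siblings) to a single power $\nu^{\pm m}$ modulo $p$, then read off the $\gamma_0$ and $\gamma_1$ coefficients via the binomial expansion together with the congruence $\nu \equiv 1 + \gamma_1(E) \pmod p$. Your observation that $\varphi$ annihilates $u$ and $\gamma_1(E)$ modulo $p$, so that every Frobenius-twisted factor $\varphi^\ell(\nu^{\pm m})$ with $\ell\ge 1$ reduces to $1$, is a cleaner packaging of the paper's factorization $x\varphi(x^{-1}) = \nu^s\alpha$ with $\alpha - 1$ supported in $u$-degree at least $p^2$.

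One slip to correct: you write $\varphi(E) = (u+p)^p$. This is false. Since $\varphi$ is $\cO$-linear with $\varphi(u) = u^p$, one has $\varphi(E) = u^p + p$, whereas $(u+p)^p = E^p$. The binomial expansion you perform is the correct one for $E^p$ (which is what you actually need, since $\gamma_1(E) = E^p/p$), and the identity $\nu = \varphi(E)/p = 1 + u^p/p$ that you invoke afterwards is correct and requires the true formula $\varphi(E) = u^p + p$. Relabeling $\varphi(E)$ as $E^p$ in that opening line fixes the presentation; nothing downstream is affected.
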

\begin{proof} We will only prove (1), while other statements can be proved similarly. We note that $x \varphi (x^{-1}) = \nu ^s \alpha$ with $\alpha = 1 + \sum _{i \geq 1} a_i ( \frac{u ^{p^2}}{p})^i. $ Using that $\nu = 1 + \frac{u ^p}{p}$, we can easily prove (1) by binomial expansion. 
\end{proof}

The following Proposition is key point we compute the reduction. Recall  $R : = \cO[\![ u,  \frac{\varpi E}{p}]\!]$. %Let $\cS$ be any ring satisfying $\fS \subset \cS$ and $h \geq 0$, we use $$\rM^h_d (\cS): = \{ A \in \rM_d (\cS)| \exists B \in \rM _d (\cS), AB = BA = E^h I_d\}. $$

\begin{prop}\label{prop-cut-tail}Assume that  $p \geq 3$ and $A \in \rM^h_d (S)$. Suppose that $A = A_1 + C_1$ with $A_1 \in  \rM^h_d (\fS)$ and $C_1 \in \rM_d (\varpi \Fil ^{h_1} S)$ with $h_1\geq   \frac{h}{1- 2 p ^{-1}} $.  Then there exists $A_n \in \rM^h_d (\fS)$,  $X_n \in \GL _d (R)$ and $h_n \geq h_1$ so that 
	\begin{enumerate}
		\item $X_n *_\varphi   A = A_{n +1} + C_{n+1} $ with $A_{n +1} \in \rM^h_d (\fS) $ and $C_{n +1} \in \rM_{d} (\varpi\Fil ^{h_{n +1}} S)$. 
		\item Let $n \to \infty $. Then $h _n \to + \infty$,  $A_n$ converges an $\mathfrak A \in \rM_d^h (\fS)$ and the sequence $X_{n}$ converges in $\GL_d(R)$. 
		\item $ \mathfrak A \equiv A_1 \mod \varpi$. 
	\end{enumerate}	   
\end{prop}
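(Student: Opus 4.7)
The plan is a successive-approximation scheme. Starting from the decomposition $A = A_1 + C_1$ given in the hypothesis, at each stage $n$ I have a decomposition $A_n + C_n$ with $A_n \in \rM^h_d(\fS)$ and $C_n \in \rM_d(\varpi\Fil^{h_n}S)$, and construct a single-step matrix $\tilde X_{n+1} := I + Z_n \in \GL_d(R)$ tailored to cancel $C_n$ modulo a deeper filtration level. Because $A_n$ has height $h$, I pick $B_n \in \rM^h_d(\fS)$ with $A_n B_n = B_n A_n = E^h I_d$ and set
\[
Z_n := -\, C_n B_n E^{-h},
\]
which guarantees $Z_n A_n = -C_n$. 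A direct expansion
\[
\tilde X_{n+1} *_\varphi (A_n + C_n) = (I + Z_n)(A_n + C_n)\sum_{k\geq 0}(-\varphi(Z_n))^{k}
\]
shows, after collecting terms, that the linear-in-$Z_n$ piece kills $C_n$ exactly, leaving a residual error built from products such as $Z_n C_n$, $A_n\varphi(Z_n)$, $Z_n A_n\varphi(Z_n)$, and higher cross terms.

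The crux is the filtration bookkeeping on this residual error. Each occurrence of $C_n$ contributes $\Fil^{h_n}$ (and the $E^{-h}$ inside $Z_n$ absorbs $h$, leaving $\Fil^{h_n - h}$), while each $\varphi(Z_n)$ multiplies the $E$-adic valuation by $p$ via $\varphi(E) = p\nu$ with $\nu\in S^\times$, and gains a matching $p^{h_n - h}$ from the front. Expanding a general element of $S$ as $w = \sum_i \fw_i \gamma_i(E)$ to isolate the $\fS$-part, one estimates term by term to obtain a new filtration bound $h_{n+1}$ strictly larger than $h_n$. The threshold $h_1 \geq h/(1-2p^{-1})$ in the hypothesis is precisely what is needed to ensure simultaneously that (a) $Z_n$ actually lies in $\varpi R$ rather than merely in $\varpi S[\tfrac1p]$, using that elements of $\varpi E^{h_n - h}S$ belong to $R$ exactly once $h_n$ exceeds the stated threshold, and (b) the bound $h_{n+1} \geq h_n/(1-2p^{-1})$ propagates the hypothesis inductively and forces $h_n \to \infty$ geometrically.

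With each single step controlled, define $A_{n+1}$ to be the $\fS$-part of $\tilde X_{n+1}*_\varphi (A_n + C_n)$ (read off from the $\gamma_i(E)$-expansion) and $C_{n+1}$ the remainder; the height-$h$ property transfers to $A_{n+1}$ because its complement sits in deep $\varpi$-filtration. Setting $X_n := \tilde X_1 \cdots \tilde X_n$, telescoping yields (1); geometric growth of $h_n$ gives convergence of $X_n$ in $\GL_d(R)$ and of $A_n$ in $\rM^h_d(\fS)$, yielding (2); and since every correction $A_{n+1} - A_n$ comes from $C_n \in \varpi S$, we obtain $\fA \equiv A_1 \pmod{\varpi}$, yielding (3). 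The main obstacle is exactly the filtration estimate in the middle paragraph: one must argue simultaneously that $\tilde X_{n+1} \in \GL_d(R)$, that the threshold $h_n \geq h/(1-2p^{-1})$ propagates to $h_{n+1}$, and that the entire residual error — including the full geometric expansion of $\varphi(\tilde X_{n+1}^{-1})$ and all cross-terms — lies in $\varpi\Fil^{h_{n+1}}S$. The constant $1 - 2p^{-1}$, positive thanks to the assumption $p\geq 3$, is exactly what the estimate needs in order to close.
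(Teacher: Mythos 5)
Your overall plan --- successive approximation using a correction $\tilde X_{n+1}$ built from the height-$h$ inverse $B_n$ of $A_n$ --- is the same idea as the paper's. But there is a real gap, and also a place where a different choice would have made your life easier.

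The gap is in the sentence ``the height-$h$ property transfers to $A_{n+1}$ because its complement sits in deep $\varpi$-filtration.'' That is not a valid inference on its own: knowing $A = A_{n+1} + C_{n+1}$ with $A \in \rM^h_d(S)$, $A_{n+1}\in\rM_d(\fS)$, and $C_{n+1}\in\rM_d(\varpi\Fil^{h_{n+1}}S)$ does \emph{not} automatically give $A_{n+1}\in\rM^h_d(\fS)$. That the entries lie in $\fS$ does not control whether an inverse up to $E^h$ exists with entries in $\fS$. Indeed, compare the proof of Proposition~\ref{prop-algorithm-OK}, where precisely this sort of descent of the height property from $S$ to $\fS$ is needed, and the paper has to invoke an explicit determinant computation ($\det(\fA') = E^{r+s}\mu$ with $\mu\in\fS^\times$) together with the fact $\fS\cap E^nS_E = E^n\fS$ to conclude. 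So the claim you are treating as automatic is a substantive step.

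The paper sidesteps the issue entirely by a smarter choice of $X$. Writing $C_n = C'_n E^h = C'_n B_n A_n$, one has the exact factorization $A_n + C_n = (I_d + C'_n B_n)A_n$, so taking $X = (I_d + C'_n B_n)^{-1}$ gives
\[
X *_\varphi (A_n + C_n) = A_n\,\varphi(I_d + C'_n B_n) = A_n\bigl(I_d + \varphi(C'_n B_n)\bigr),
\]
a \emph{single} error term, no geometric series and no cross-terms such as $Z_nC_n$ or $Z_n C_n\varphi(Z_n)$. Decomposing $\varphi(C'_n B_n) = D_{n+1} + D'_{n+1}$ with $D_{n+1}\in\rM_d(\varpi\fS\cap p^{l_n}S)$ and $D'_{n+1}\in\rM_d(\varpi\Fil^{h_{n+1}}S)$ then yields $A_{n+1} = A_n(I_d + D_{n+1})$. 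Since $D_{n+1}$ has entries in $\varpi\fS$, the factor $I_d + D_{n+1}$ lies in $\GL_d(\fS)$, so $A_{n+1}\in\rM^h_d(\fS)$ is immediate --- no abstract ``transfer'' claim is needed. Your choice $\tilde X_{n+1}= I + Z_n$ (with $Z_n = -C_nB_nE^{-h}$, which is the negative of the paper's $C'_nB_n$) avoids inverting a matrix, but at the cost of the geometric series $\varphi(\tilde X_{n+1}^{-1}) = \sum_k(-\varphi(Z_n))^k$ and the extra cross-terms; to repair the gap along your route you would have to show those cross-terms land \emph{entirely} in $\varpi\Fil^{h_{n+1}}S$, so that the $\fS$-part is again exactly of the form $A_n(I_d + D)$ with $D\in\rM_d(\varpi\fS)$ --- you have not done this. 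Using the paper's $(I_d + C'_n B_n)^{-1}$ makes the problem disappear.
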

\begin{proof} We make induction on $n$. Let $A=A_0$, $X_0= I_d$. Then the case for $n = 0$ is clear. Now assume the statement is valid for $n = m-1$. Let us construct $A_{m +1}$ and $C_{m +1}$. Since $C_{m} \in \rM_d (\varpi \Fil ^ {h_m} S)$, we can write $C_m =  C'_m E^h=  C'_m B_m A_m$. Therefore   
	$$X_{m -1} *_\varphi   A= A_m + C_m = A_m + C'_m B_m A_m = (I_d + C'_m B_m) A_m.$$  
	Set $X_m = (I_d+ C'_m B_m)^{-1} X_{m -1}$. Let us accept that $X_m \in \GL_d (R)$ and postpone the proof later. 
	Then 
	\[  X_m *_\varphi  A  = A_m (I _d + \varphi (C'_m B_m))\]
	Let  $x$ be an entry of $C'_m$. It is clear that $x = \sum\limits_ {i \geq h_m } a_i \frac{ E^{i - h}}{p ^{\lfloor \frac i p \rfloor}}$ with $a _i \in \varpi \cO$.  As $ \varphi (E) = p\nu$ with $\nu \in S^\times$, we have $\varphi (x) \in \varpi p ^{l _m} S$ where $l_m =  h_m -h - \lfloor \frac {h_m}{p} \rfloor $. Since any $ f \in S $ can be written as $\sum\limits _{i \geq 0} b_i \frac {E^i}{p^{\lfloor \frac i p\rfloor}}$ with $b_i \in W(k)[u]$, we have $\varphi (x)= y + z$ with $ y \in \varpi \fS \cap p ^{l _m} S$ and $z \in \varpi \Fil ^ {h _{m +1}} S$ with $h _{m +1} = p (l_m +1)$. Consequently, $\varphi (C'_m B_m) = D_{m +1} + D' _{m +1}$ with $D_{m +1} \in \rM _d (\varpi  \fS \cap p ^{l_m} S)$ and $D'_{m +1} \in \rM_d (\varpi \Fil ^{h_{m +1}} S)$. 
	We then set $A_{m +1} = A_m (I_d + D_{m +1})$ and $C_{m +1} = A_m D' _{m +1 }$. Since $D_{m +1} \in \rM _d (\varpi \fS \cap p ^{l_m} S)$, $ (I_d + D_{m +1}) ^{-1} = \sum_{i \geq 0} (-D_{m +1}) ^i\in \GL_d(\fS )$, we conclude that $A_{m +1} \in \rM ^h _d (\fS)$,  $A_{m +1}\equiv A_m \mod \varpi$ inside $\rM _d (\fS)$, and $A_{m +1}- A_m\in \rM_d  (p ^{l_m} S) $. So if $h_m \to + \infty$ then $l _m \to + \infty$, and consequently, $A_m$ converges to  $\mathfrak A \in \rM ^h _d(\fS)$ and $\mathfrak A\equiv A _1 \mod \varpi$. 
	
	Now  we must show that $h_{m +1} > h_m$, which is  equivalent to 
	$ h_m (1 -p ^{-1 }) - \lfloor \frac {h_m}{p} \rfloor +1 > h$. We have $h_1 (1 -p ^{-1 }) - \lfloor \frac {h_1}{p} \rfloor +1 >  h _1 (1 - p ^ {-1}- p ^{-1}) \geq h$. So 
	$h _2> h_1 $. For $m >  2$, we have $ h_m (1 -p ^{-1 }) - \lfloor \frac {h_m}{p} \rfloor \geq h_m ( 1 - 2 p ^{-1}) \geq h_2 (1- 2 p ^{-1}) > h$ as required. 
	
	Finally we need to check that $X_n\in \GL_d (R)$ and $X_n$ converges in $\GL_d(R)$. First note that $C_{m} \in \rM_d (\varpi \Fil ^ {h_m} S)$, each entries of  $C'_m B_m $ has the shape  $ x = \sum\limits_ {i \geq h_m } a_i \frac{ E^{i - h}}{p ^{\lfloor \frac i p \rfloor}}$ with $a _i \in \varpi \cO$. Since $h_m \geq h_1 > h/ (1- p ^{-1})$, we see $i -h \geq \lfloor \frac i p \rfloor$.  Hence $x\in R$ and $(I_d  + C'_m B_m) ^{-1} \in \GL_d(R)$. Since $h_m \to + \infty$, we see that $X_n : = \prod\limits_{i =0}^n (I_d + C'_i  B_i)^{-1}$ converges in $\GL_d (R)$.  
\end{proof}
%\begin{rmk}\label{rmk-goodenough} To use the above Proposition in this paper, it suffices to assume that $h_1 = s+2 $ and $C_1 \in \rM_d (\varpi \Fil ^{h_1} S + E \Fil ^{h_1} S)$. Indeed, we can see from the above proof, using $\varphi (E) = p \nu $, we have  $C_2 \in \varpi \Fil ^ {h_2} S$  and $h_2 = p (l_1 +1)\geq 2 p$. Then we may replace $h' = h_2 = 2p$ and $A_1 $, $C_1$ replaced by $A_2$ and $C_2$  then the above Proposition holds.   \end{rmk}
\subsection{Algorithm to compute $\overline \M$} \label{subsec-algorithm} We set $I : = \varpi p S + \varpi \Fil ^{2p} S + E \Fil ^{2p} S$ and $J : = (\varpi, E , \frac{E^2}{p}) \Fil ^{2p}S$.  Note that $I \subset S$ is an ideal but $J\subset S[\frac 1 p]$ is only an $S$-module. 
 Let $x\in I$ then $1+x\in S ^\times$ and $ (1+x) ^{-1}-1 \in I$.

We will apply the following  operations $X_n$ to matrices $\cA_0 = \cA$ in \eqref{eqn-A} and \eqref{eqn-A-II}  so that 
 $\cA_{n +1} = X_n *_\varphi \cA_{n}$  and  
\begin{enumerate}
\item $X_n $ is either $[ x, y,  z]\in \GL_d (F)$  or $X_n \in \GL_d (S[\frac 1 p])$ with $\det (X_n)=1$.  
\item After finite many steps,  $\cA_m = \mathfrak A_0 + C_0$ so  that  $\mathfrak A_0 \in \GL_3 (\fS)$ and $ C_0 \in \rM_3 (I + J)$. 
\item The second row of $\fA_0$  and $C_0$ are $E^r a_{2j}, E ^r b_{2j}$ with  $a_{2j}\in \fS$ and $b_{2j} \in I+  \Fil^{2p} S $ for each $j$ respectively. 
% so that  $\det (\mathfrak A_0) \equiv  \det (\cA)  \mod E^r \Fil ^{2p} S$. 
\end{enumerate}
\begin{prop}\label{prop-algorithm-OK}
	Notations as the above. There exists an invertible matrix $Y \in \GL_3 (R[1/p])$ and $\fA \in \rM_3^h (\fS)$ so that $Y *_\varphi \cA = \fA$ and $\fA\equiv \fA_0 \mod \varpi$.  
\end{prop}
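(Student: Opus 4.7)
The plan is to invoke Proposition~\ref{prop-cut-tail} iteratively to drive $\cA_m$ to a matrix in $\rM_3^h(\fS)$, but since $C_0$ is not $\varpi$-divisible the proposition cannot be applied directly.  Decompose
$$I + J = \bigl(\varpi p S + \varpi\Fil^{2p}S\bigr) + \bigl(E\,\Fil^{2p}S + \tfrac{E^2}{p}\Fil^{2p}S\bigr);$$
the first summand is $\varpi$-divisible and already acceptable for Proposition~\ref{prop-cut-tail}, while the second contains the non-$\varpi$ generators of $J$ and must be cleared in a preparatory step before the iteration.

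The clearing step exploits hypothesis~(3): the second row of both $\fA_0$ and $C_0$ is divisible by $E^r$ with quotient in $\fS + I + \Fil^{2p}S$, so the non-$\varpi$ $J$-tails live only in rows~$1$ and~$3$.  The plan is to build $X \in \GL_3(S[1/p])$ of lower-triangular shape using the $\fS$-part of the second row as pivot, so that the $E$- and $\tfrac{E^2}{p}$-divisible tails in rows $1$ and $3$ are absorbed into the $\fS$-matrix when we pass to $X\cA_m\varphi(X^{-1})$.  The key algebraic input is $\varphi(E) = p\nu$ with $\nu \in S^\times$: the $\varphi$-image of an $E$-factor coming from a row-$2$ pivot produces a $p$-factor, and since $p$ is $\varpi$-divisible (a multiple of $\varpi^e$ for the ramification index $e$), each non-$\varpi$ $J$-tail becomes a $\varpi$-divisible error sitting in a high-filtration piece of $S$.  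The outcome is $X *_\varphi \cA_m = \fA'_0 + C'_0$ with $\fA'_0 \in \rM_3^h(\fS)$, $\fA'_0 \equiv \fA_0 \pmod \varpi$, and $C'_0 \in \rM_3(\varpi\Fil^{h_1}S)$ where $h_1$ is large enough that $h_1(1-2p^{-1}) \geq h$.

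With this reduction in hand, Proposition~\ref{prop-cut-tail} applies to $(\fA'_0, C'_0)$, producing a sequence $X_n \in \GL_3(R)$ whose product $Y_\infty$ converges in $\GL_3(R)$ and satisfies $Y_\infty *_\varphi (X *_\varphi \cA_m) = \fA \in \rM_3^h(\fS)$ with $\fA \equiv \fA'_0 \equiv \fA_0 \pmod \varpi$.  Composing $Y_\infty$ with $X$ and with the finitely many operations from the algorithm used to reach $\cA_m$ (all of which lie in $\GL_3(R[1/p])$) yields the desired $Y \in \GL_3(R[1/p])$.  The main obstacle is the clearing step: exhibiting a single lower-triangular $X$ that simultaneously kills every non-$\varpi$ $J$-tail in rows~$1$ and~$3$, preserves the $\fS$-membership of $\fA'_0$ modulo $\varpi$, and leaves the remainder inside $\varpi\Fil^{h_1}S$.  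This will require careful bookkeeping of how $\varphi$ interacts with $\Fil^\bullet S$, most importantly the two identities $\varphi(E) = p\nu$ and $\varphi(\Fil^i S)\subseteq p^i S$ in the relevant range $i \leq h$, together with the explicit expansion of Lemma~\ref{lem-estimate} to identify the $\gamma_i(E)$-coefficients that actually show up in the entries of $\cA_m$.
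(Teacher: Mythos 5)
The proposal correctly diagnoses the bottleneck — the ideal $J$ is not $\varpi$-divisible, so Proposition~\ref{prop-cut-tail} cannot be invoked directly and a preparatory clearing step is needed — and correctly identifies $\varphi(E)=p\nu$ as the engine that eventually produces $\varpi$-divisibility. But the clearing step is exactly where the proof lives, and there you diverge from the paper and leave a genuine gap (which you yourself flag as ``the main obstacle'').

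Two specific problems. First, the paper's $X$ is not a lower-triangular row-pivot on the second row; it is $X=(I_3+C'B)^{-1}$, where the decomposition $\cA_m=\fA'+\tilde C$ (with $\fA'\in\rM_3(\fS)$ from the $\varpi\fS$-part of $I$, and $\tilde C\in\rM_3(J)$) is combined with $B\fA'=E^h I_3$ to write $\tilde C=C'E^h=C'B\fA'$ and hence $X\cA_m=\fA'$ exactly. The $\varphi$-gain then comes from the \emph{column} side: $X*_\varphi\cA_m=\fA'(I+\varphi(C'B))$, and because each entry of $C'=\tilde C E^{-h}$ is $\sum_{j\geq 2p}a_j\frac{E^{j-h}}{p^{\lfloor j/p\rfloor}}$ with $a_j\in(\varpi,E,\frac{E^2}{p})\fS$, one gets $\varphi(C')\in\rM_3(\varpi p^2 S)$ (using $h\leq 2p-4$), which splits as a $\varpi\fS$-piece plus a $\varpi\Fil^{2p}S$-piece, landing you inside Proposition~\ref{prop-cut-tail}'s hypotheses. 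A genuine row-pivot on the second row would require dividing $J$-entries by $E^r$, and these quotients do not lie in $S$, so the naive $X$ you describe is not available in the first place. Second — and this is the more serious omission — your plan asserts $\fA'_0\in\rM_3^h(\fS)$ as an \emph{output} of the clearing step, whereas the paper must \emph{prove} that the $\fS$-part $\fA'$ already has height $h$ as a \emph{precondition} to building $X$ (one needs $B$ with $B\fA'=E^hI_3$). That is a substantial separate argument: using $\det(\cA_m)=E^{r+s}$ and the fact (exactly condition~(3) of the algorithm) that row $2$ of both $\fA'$ and $\tilde C$ carries a factor $E^r$, one shows $\det(\fA')\equiv E^{r+s}\pmod{E^r\Fil^{2p}S}$, hence $\det(\fA')=E^{r+s}\mu$ with $\mu\in\fS^\times$ (here $s<2p$ is used); a further adjugate comparison then pins $\fA'$ down in $\rM_3^h(\fS)$. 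So condition~(3) is used not as a pivot-row licence but as the input to this determinant computation. Without the height claim and without the correct shape of $X$, the clearing step does not go through.
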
 
\begin{proof}
	For any $x\in I$, we can write $x = x_0 + x_1$ with $x \in \varpi \fS$ and $x _1 \in J  = (\varpi , E, \frac{E^2}{p})\Fil ^{2p} S$. If we split $C_0$ in such way then $\cA_m = \fA' + \tilde C$ so that  
	$\tilde C \in \rM_3 (J)$ with second rows entries in $E^r \Fil^{2p}S$ and $\fA_0 \equiv \fA' \mod \varpi$. Now we claim that $\fA' \in \rM _3^h (\fS)$ where $h = s \leq 2p -4$. 
	
	To show the claim, we first note that (1) in the above implies that $\det (\cA_m) = E ^{r+s}$. Since the second row of and $\fA'$ and $\tilde C$ has factor $E^r$, we easily see that 
	$\det (\fA') \equiv E^{r+s} \mod E^r \Fil ^{2p} S$. Then $\det (\fA') - E ^{r+s} \in E^r \Fil^{2p}S \cap \fS = E^{r+ 2p} \fS$.   Since $s < 2p$, we see that $\det (\fA') = E ^{r+s} \mu$ with $\mu \in \fS^\times$. Since $\cA \in \rM^h_3 (S)$, $\cA_m \in \rM^h_3 (S[\frac{1}{p}]) $. Then there exists a $B \in \rM_3 (S [\frac 1 p])$ so that $\cA_m B = E^h I _3$. So 
	$\fA ' B = E^h I _3 + D'$ so that $D'  \in \rM_3 (\Fil^{2p} S [\frac 1 p])$. Write $D'= E ^{2p } D $. We see that $\fA' B' = E^h I_3$ with $B ' = B  (I + E^{2p - h} D)^{-1}$. Note that entries of $B'$ are in $S_E : = F[\![E]\!]$. Since $\det (\fA') = E^{r+ s}\mu$. There exists a matrix $B''\in \rM_3 (\fS)$ so that $B'' \fA' = E^{r+s} I_3$. Therefore, $E^{r+s} B '  = E^h B''$. Since all entries of $E ^h B''$ are in $\fS$. This forces all entries of $B'$ are in $\fS$ because $\fS\cap E ^n S_E = E^n \fS$. Therefore, $\fA' $ is in $\rM^h_3 (\fS)$ as claimed.  
	
	We can not use Proposition \ref{prop-cut-tail} now as $J$ is not inside $\varpi \Fil^{2p} S$. Since $\fA'$ has height $h$, we can decompose $\tilde C=  C' E^h = C' B \fA'$. Let $X= (I_3 + C' B  )^{-1}$. As in the proof of Proposition \ref{prop-cut-tail}, we have 
	$X *_\varphi \cA_m = \fA' (I + \varphi (C' B))$. Since $\tilde C \in \rM_3 (J)$, we see each entry of $C'$ has the form $\sum\limits_{j \geq 2p} a_j\frac {E^{j -h}}{p ^{\lfloor \frac j p \rfloor}}$ with $a_j \in \varpi \fS + E \fS + \frac{E^2}{p} \fS$. Use $\varphi (E) = p \nu $ with $\nu \in S^\times$ and $h \leq 2p-4$, we see that $\varphi (C') \in \rM _3 (\varpi p ^2 S )$. Therefore $\varphi (C')=  C'' + D''$ with $C'' \in \rM _3 (\varpi \fS)$ and $D'' \in \rM _3 (\varpi \Fil ^{2p} S)$. Set $A_1 = \fA'(I_3 + C'' \varphi (B) )$ and $C_1 = \fA' D'' \varphi (B)$, and apply Proposition \ref{prop-cut-tail} to $A_1$ and $C_1$. Then $\fA \equiv \fA' \equiv \fA_0 \mod \varpi $ is required. 	
\end{proof}

We need to enlarge the ring of coefficient to discuss deformation theory involved in Theorem \ref{thm-deformation}. Let $(\Lambda, \fm)$ be a complete noetherian local $\cO_F$-domain, $\fS _{\Lambda}: = \Lambda[\![u ]\!]$, $S_\Lambda : = \fS_\Lambda [\![\frac {E^p}{p}]\!]$ and $R_\Lambda$ be the $(\fm, u)$-completion of $\Lambda [u , \frac E p ]$.  
 We extends $\varphi$ on $\fS $ to $\fS_\Lambda$ and $R_\Lambda$  $\Lambda$-linearly. We define \emph{Kisin module over $\fS_\Lambda$ of height $h$}  to be a finite free $\fS_\Lambda$-module $\M_{\Lambda}$ with a $\varphi _{\fS_\Lambda}$-semi-linear map $\varphi_{\M_{\Lambda}} : \M_{\Lambda} \to \M _{\Lambda} $ so that  $\coker (1 \otimes \varphi _{\M_\Lambda})$ is killed by $E^h$, where $1 \otimes \varphi _{\M_\Lambda} : \fS_{\Lambda} \otimes_{\varphi, \fS_\Lambda } \M_{\Lambda} \to \M_{\Lambda}$ is the linearization of $\varphi_{\M_\Lambda}$. 
 Now we modify the requirements of algorithm before Proposition \ref{prop-algorithm-OK} as following: set $I_{\Lambda} = \fm pS_{\Lambda} + \fm \Fil ^{2p}S_{\Lambda} + E \Fil ^{2p} S_{\Lambda} $ and $J_{\Lambda}: = (\fm , E , \frac{E^2} {p}) \Fil ^{2p} S_{\Lambda}$. Now replace $I$, $J$, $F$ by $I _\Lambda$, $J _{\Lambda}$ , $\Lambda[\frac 1 \fm]$ respectively, we have the Proposition \ref{prop-algorithm-OK} with coefficients in $\Lambda$.  
 \begin{prop}\label{prop-algorithm-lambda}
 	Using modified notations as the above. There exists an invertible matrix $Y \in \GL_3 (R[\frac 1 \fm])$ and $\fA \in \rM_3^h (\fS_{\Lambda})$ so that $Y *_\varphi \cA = \fA$ and $\fA\equiv \fA_0 \mod \fm$.  
 \end{prop}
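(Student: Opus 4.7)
The plan is to mirror the proof of Proposition \ref{prop-algorithm-OK} verbatim, replacing the uniformizer $\varpi$ by the maximal ideal $\fm$, the coefficient ring $\cO$ by $\Lambda$, and the field $F$ by the localization $\Lambda[\tfrac{1}{\fm}]$ (using that $\Lambda$ is a domain). I first need to verify that Proposition \ref{prop-cut-tail} extends to $\Lambda$-coefficients. The iterative construction used only three ingredients: (i) $\varphi(E) = p\nu$ with $\nu \in S_\Lambda^\times$, (ii) every $f \in S_\Lambda$ admits a unique expansion $f = \sum_i b_i \frac{E^i}{p^{\lfloor i/p \rfloor}}$ with $b_i \in \Lambda[u]$ of bounded $u$-degree, and (iii) the numerical estimate $h_{n+1} = p(l_n+1) > h_n$ under the hypothesis $h_1 \geq h/(1-2p^{-1})$. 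All three carry over unchanged, so the $\Lambda$-analog of Proposition \ref{prop-cut-tail} (with $\varpi$ replaced by $\fm$ throughout) holds with the same proof.

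Next, I would run the algorithm described before Proposition \ref{prop-algorithm-OK}, producing a matrix $\cA_m = \fA_0 + C_0$ with $\fA_0 \in \GL_3(\fS_\Lambda)$ and $C_0 \in \rM_3(I_\Lambda + J_\Lambda)$, whose second row has the required factorization through $E^r$. Splitting $C_0$ along the decomposition $I_\Lambda = \fm\fS_\Lambda + J_\Lambda$-type piece, I obtain $\cA_m = \fA' + \tilde{C}$ with $\fA_0 \equiv \fA' \mod \fm$ and $\tilde{C} \in \rM_3(J_\Lambda)$ having second-row entries in $E^r\Fil^{2p}S_\Lambda$.

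The central step is showing $\fA' \in \rM_3^h(\fS_\Lambda)$. By construction $\det(\cA_m) = E^{r+s}$, and since the second rows of $\fA'$ and $\tilde{C}$ share the factor $E^r$, expansion of the determinant gives $\det(\fA') \equiv E^{r+s} \mod E^r\Fil^{2p}S_\Lambda$. I then need the identity
\[
\fS_\Lambda \cap E^r\Fil^{2p}S_\Lambda = E^{r+2p}\fS_\Lambda,
\]
which, since $\Lambda$ is a complete noetherian local domain, reduces to the analogous statement over $\cO$ via faithful flatness arguments and the observation that $E$ is distinguished in both $\fS_\Lambda$ and $S_\Lambda$. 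Combined with $s < 2p$ this forces $\det(\fA') = E^{r+s}\mu$ with $\mu \in \fS_\Lambda^\times$. An inverse $B' \in \rM_3(\fS_\Lambda)$ is then obtained by the same argument: construct $B$ over $S_\Lambda[\tfrac{1}{p}]$, then use $\fS_\Lambda \cap E^n S_{\Lambda,E} = E^n \fS_\Lambda$ (the same principle as above, where $S_{\Lambda,E}$ is the appropriate $E$-adic completion) to pull entries back into $\fS_\Lambda$.

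Having established $\fA' \in \rM_3^h(\fS_\Lambda)$, I finish by the same device as in Proposition \ref{prop-algorithm-OK}: decompose $\tilde{C} = C'B\fA'$, set $X = (I_3 + C'B)^{-1} \in \GL_3(R_\Lambda)$, and compute $X *_\varphi \cA_m = \fA'(I_3 + \varphi(C'B))$. The condition $h \leq 2p-4$ together with $\varphi(E) = p\nu$ forces $\varphi(C') \in \rM_3(\fm p^2 S_\Lambda)$, which splits as $C'' + D''$ with $C'' \in \rM_3(\fm\fS_\Lambda)$ and $D'' \in \rM_3(\fm\Fil^{2p}S_\Lambda)$. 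Applying the $\Lambda$-version of Proposition \ref{prop-cut-tail} to the pair $(A_1, C_1) = (\fA'(I_3 + C''\varphi(B)), \fA'D''\varphi(B))$ yields the desired $Y \in \GL_3(R_\Lambda[\tfrac{1}{\fm}])$ and $\fA \in \rM_3^h(\fS_\Lambda)$ with $\fA \equiv \fA' \equiv \fA_0 \mod \fm$. The main obstacle is purely ring-theoretic: verifying the intersection identity $\fS_\Lambda \cap E^n S_{\Lambda,E} = E^n\fS_\Lambda$ and the unit criterion for $\det(\fA')$ in the more general coefficient setting, since these are the only places where passing from $\cO$ to the more general ring $\Lambda$ is not purely notational.
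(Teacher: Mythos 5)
Your argument is correct and takes essentially the same route as the paper, which simply asserts that the proofs of Propositions \ref{prop-algorithm-OK} and \ref{prop-cut-tail} carry over verbatim under the replacements $S \mapsto S_\Lambda$, $\varpi \mapsto \fm$, $F \mapsto \Lambda[\frac{1}{\fm}]$, and records the resulting $\Lambda$-version of Proposition \ref{prop-cut-tail}. You have merely made explicit the routine verifications---the three ingredients transferring in the $\Lambda$-analogue of Proposition \ref{prop-cut-tail}, the determinant computation, and the intersection identity $\fS_\Lambda \cap E^n S_{\Lambda,E} = E^n\fS_\Lambda$ (which holds since $\fS_\Lambda = \Lambda[\![E]\!]$)---that the paper leaves implicit.
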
 
\begin{proof} The proof is the same as that of Proposition \ref{prop-algorithm-OK} (which essentially use Proposition \ref{prop-cut-tail}) by suitable replacement, like $S$ replaced by $S_\Lambda$. Also we need to replace $\varpi$ by $\fm$ everywhere.  Proposition \ref{prop-cut-tail} in this version (the same proof holds after the above replacement) is the following: Assume that  $A \in \rM^h_d (S_\Lambda)$. Suppose that $A = A_1 + C_1$ with $A_1 \in  \rM^h_d (\fS_\Lambda)$ and $C_1 \in \rM_d (\fm  \Fil ^{h_1} S_\Lambda)$ with $h_1\geq   \frac{h}{1- 2 p ^{-1}} $.  Then there exists $A_n \in \rM^h_d (\fS_\Lambda)$,  $X_n \in \GL _d (R_\Lambda)$ and $h_n \geq h_1$ so that 
\begin{enumerate}
\item $X_n *_\varphi   A = A_{n +1} + C_{n+1} $ with $A_{n +1} \in \rM^h_d (\fS_{\Lambda}) $ and $C_{n +1} \in \rM_{d} (\fm \Fil ^{h_{n +1}} S_\Lambda)$. 
\item Let $n \to \infty $. Then $h _n \to + \infty$,  $A_n$ converges an $\mathfrak A \in \rM_d^h (\fS_\Lambda)$ and the sequence $X_{n}$ converges in $\GL_d(R_\Lambda)$. 
\item $ \mathfrak A \equiv A_1 \mod \fm$. 
\end{enumerate}	   
\end{proof}
 
%\begin{rmk}\label{rem-enlarge-coefficient}It is easy to check that Proposition \ref{prop-algorithm-OK} and \ref{prop-cut-tail} are still valid if we replace $\cO_F$ by a $\varpi$-complete, flat $\cO_F$-domain $\Lambda$. In this situation, we obtain a Kisin module $\M _{\Lambda}$ over $\fS _{\Lambda}: = \Lambda[\![u ]\!]$ of height $h$. To be more precise, we extends $\varphi$ on $\fS $ to $\fS_\Lambda$ so that $\varphi$ acts on $\Lambda$-linear. 	a finite free $\fS_\Lambda$-module $\M_{\Lambda}$ with a $\varphi _{\fS_\Lambda}$-semi-linear map $\varphi_{\M_{\Lambda}} : \M_{\Lambda} \to \M _{\Lambda} $ so that  $\coker (1 \otimes \varphi _{\M_\Lambda})$ is killed by $E^h$, where $1 \otimes \varphi _{\M_\Lambda} : \fS_{\Lambda} \otimes_{\varphi, \fS_\Lambda } \M_{\Lambda} \to \M_{\Lambda}$ is the linearization of $\varphi_{\M_\Lambda}$.  \end{rmk}

\section{Row operations and computations}	
Now we carry out the algorithm in \S \ref{subsec-algorithm} to compute matrices $\cA$ in \eqref{eqn-A} and  \eqref{eqn-A-II}. Our goal is to find  $X_n$ and $\cA_m= \fA_0+ C_0$ explicitly in each case. Then we will find $\bar \cA = \fA_0 \mod \varpi$ explicitly, which corresponds to the  reduction $\overline \M' $ of a Kisin module $\M' = \M (T')$ by the discussion in the end of \S \ref{subsec-summary}. Recall that $I : = \varpi p S + \varpi \Fil ^{2p} S + E \Fil ^{2p} S$ and $\varphi(E)= p \nu$. In the following, we always reserve  $\eps_{ij}$,  $\eps'_{ij }$ when they are in $I$. For each steps of operations, if $\eps_{ij}$ is changed (updated) but still in $I$ then we always use $\eps'_{ij}$ to replace $\eps_{ij}$. 

\subsection{Type I and III matrices}	Now we first consider matrix $\cA$ in \eqref{eqn-A} with possibly $c\in \cO^\times$ (which allows type III by \S \ref{rmk-III-I}).   
We first  have $$\begin{pmatrix}
1  &  - a (\sum\limits_{i \geq 1} \fa_i \frac{ E^{p i -r}}{p ^i})  & 0 \\  0 & 1  & 0 \\ 0 & 0 & 1 
\end{pmatrix} *_\varphi \cA  = \begin{pmatrix}
a \fa_0    &   \fd  & 1 \\  E^r &  c E^{r} \nu^{-r}y \varphi (y ^{-1}) + E^r \eps_{22}  & 0 \\ 0 & E^s  & 0  
\end{pmatrix}$$ where $\eps_{22} = a \varphi ( \sum _{i \geq 1}\fa_i \frac{ E^{p i -r}}{p ^i})\in I $,  %$\eps_{12} = a\fa_0 a \varphi ( \sum _{i \geq 1}\fa_i \frac{ E^{p i -r}}{p ^i}) $ 
and 
\[\fd :  = b \sum_{i \geq 0} \fb_i \gamma_i (E) - ac \left (\sum_{i\geq 1 }\fa_i \gamma _i (E) \right) \left (\sum_{i \geq 0} \fc_i \gamma_i (E)\right )  +  a^2\fa_0  \varphi \left ( \sum _{i \geq 1}\fa_i \frac{ E^{p i -r}}{p ^i}\right )  = 
b \fb_0 + \sum_{i \geq 1} \fd_i \gamma_i (E) \]
Using $\varphi (E) = p \nu$, if we write $\fd_i = \sum\limits_{j = 0}^{p-1}\fd_{ij} u ^j $ with $\fd_{ij} \in \cO$, $\lambda: = \min \{v_p (b), v_p (ac), v_p (a^2)\}$ then 
it is easy to check via Lemma \ref{lem-estimate} that 
\begin{equation}\label{eqn-b'}\fd_{10} \equiv (s-r)b - sac \mod \varpi p; \ \   v_p (\fd_{1j})  \geq 1 + \lambda, \  1 \leq j \leq p-1 \text{ and } v_p(\fd_{ij})\geq \lambda,\ i \geq 2.   \end{equation}  Now update $\cA$ and apply $\begin{pmatrix}
	1  &  0   & 0 \\  0 & 1  & - c \sum\limits_{i \geq 1} \fc _i \frac{E^{ip +r -s}}{p ^i} \\ 0 & 0 & 1 
\end{pmatrix} *_\varphi \cA $, we obtain $\begin{pmatrix}
	a \fa_0    &   \fd  & 1 + \eps_{13} \\  E^r &  c \fc_0 E^{r}   + E^r \eps_{22}  & E^r c\delta _{23} \\ 0 & E^s  & E^s c\delta_{33}  
\end{pmatrix}$
where $\delta_{j3} =  \alpha_j \sum\limits_{i \geq 1}  \varphi (\fc _i \frac{E^{ip +r -s}}{p ^i})\in pS $ where $\alpha_j$ from second column for $j = 2, 3 $, and $\eps_{13} =  \fd  \sum\limits_{i \geq 1}  \varphi (\fc _i \frac{E^{ip +r -s}}{p ^i})\in \varpi pS \subset I. $ 
%Since $\varphi (E)= p \fc $, we see $\eps_{i3} \in E^{h_i} \varpi p S$ with $(h_1, h_2 , h_3) = (0 , r, s)$. 
Set $$b': = (s-r)b -sac . $$
We have following cases to discuss. 
\subsubsection{$v_p (b')> 1$.}\label{subsubsec-first-case} Then $\bar {\cA}  = \begin{pmatrix}0 & 0 & 1 \\ u ^r & 0 & 0 \\ 0 & u ^s & 0 \end{pmatrix}$ for type I matrix because $\fd_1 \gamma_1(E) \in \varpi \fS$ and $c \delta_{i 3}\in I$ in this situation. For type III matrix, $c \in \cO^\times$, write $\bar c = c\mod \varpi$.  
We have $c E^s\delta_{33}\in I$, also  $c \delta_{23}= \alpha E^p + \eta$ with $\alpha \in \cO [u]$ and $\eta\in p \fS + p \Fil ^{2p }S\subset p \fS + J$. 
 Therefore  $\bar \cA = \begin{pmatrix}0 & 0 & 1 \\ u ^r & \bar c u ^r &  \bar \alpha u ^{r+p} \\ 0 & u ^s & 0 \end{pmatrix}$ where $\bar \alpha = \alpha \mod \varpi$. %    The above matrix can be simplified to  $ \begin{pmatrix}0 & 0 & 1 \\ u ^r & \bar c u ^r &  \bar \alpha u^{r+p} \\ 0 & u ^s & 0 \end{pmatrix}$.  
	
\subsubsection{$v_p (b')=1 $.} \label{subsubsec-2nd-case} In this case,  let $\bar b' = \frac{b'}{p}\mod \varpi$. For similar argument as the above. For type I,   we have 	
 $\bar {\cA}  = \begin{pmatrix}0 & \bar b '  u ^p & 1 \\ u ^r & 0 & 0 \\ 0 & u ^s & 0 \end{pmatrix}$.   For type III, we have 
  $\bar {\cA}  = \begin{pmatrix}0 & \bar b '  u ^p & 1 \\ u ^r & \bar c u ^r & \bar \alpha u ^{r+p} \\ 0 & u ^s & 0 \end{pmatrix}$. 
 
 \subsubsection{$v_p (b ') < 1$ and $v_p (a)\leq v_p (b)$.}\label{subsubsec-I-1}
 Now the real challenging case is when $v_p(b ') < 1$, which we will assume from right now. Write $d : = \fd_{10} $, first note in this case $v_p (b') = v_p(d)$ by \eqref{eqn-b'}. 
 Our next row operation is to kill $E^s$-term.  Consider 
 $\begin{pmatrix}
 1  &  0   & 0 \\  0 & 1  & 0 \\ -\frac{p}{d} E^{s-p}  & 0 & 1 
 \end{pmatrix} *_\varphi \cA $, we obtain 
  \begin{equation}\label{eqn-Matrix}
  	\cA = \begin{pmatrix}
  	a \fa_0  + \eps_{11}  &   \fd  & 1 + \eps_{13} \\  E^r(1 + \eps_{21} )  &  c \fc_0 E^{r}   + E^r \eps_{22}  & E^r c \delta_{23} \\ -\frac{pa \fa_0}{d} E^{s-p} + \frac{p^2}{d} E^{s-p}\eps_{31} & -\frac{pb\fb_0}{d} E^{s-p}-\frac{p}{d} E^{s-p} \sum\limits_{i \geq 1} \fd'_{i}\gamma_i(E)     & - \frac{p}{d} E^{s-p} (1 + \eps'_{33})  
  	\end{pmatrix}
  \end{equation}
where $\fd'_i= \fd_i, i \geq 2$,  $\fd'_1 = \fd_1 - \fd_{10}$ and $- \frac{p}{d} E^{s-p} (1 + \eps'_{33})= - \frac{p}{d} E^{s-p} (1 + \eps_{13}) + E^s c \delta_{33}$. Since $E^s \delta _{33} \in  E ^s p  S = E^{s-p} p \gamma_1 (E) p  S$, we see $\eps'_{33} \in p \varpi S$; All terms of $\eps_{i 1}$ comes from column operation which add $3$rd columns multiplying 
$\frac p d \varphi (E ^{s-p})$ to the first column. Note that $\varphi(E^{s-p})\in p ^ 2 S$, we conclude that all $\eps_{i 1} \in p \varpi S$.

Now first assume that  $v_p (a)\leq v_p (b)$. Update $\eps'_{33}$ by $\eps_{33}$.   Apply  $\begin{pmatrix}
1  &  0   & 0 \\  0 & 1  & 0 \\ 0   & 0 &  \frac{d}{pa} 
\end{pmatrix} *_\varphi \cA $, we update  $$\cA = \begin{pmatrix}
a \fa_0    &   \fd & \frac{pa }{d} (1 + \eps_{13})  \\  E^r(1+ \eps_{21}) &  c \fc_0 E^{r}   + E^r \eps_{22}  & \frac{pa}{d} E^r c\delta _{23} \\ -{\fa_0} E^{s-p} (1 + \eps'_{31}) & -\frac{b\fb_0}{a} E^{s-p}-\frac{1}{a} E^{s-p} \sum\limits_{i \geq 1} \fd'_{i}\gamma_i(E)   & - \frac{p}{d} E^{s-p} ( 1+ \eps_{33})  
\end{pmatrix}$$
Here $\eps'_{31} = \frac{p}{a} \eps_{31} \in \varpi p S $ because $v_p (a)\leq v_p (b)$ implies that $v_p (a)\leq v_p (b')< 1$. Since $v_p (a)\leq v_p (b)$, \eqref{eqn-b'} implies that $\frac{1}{a} E^{s-p} \sum\limits_{i \geq 1} \fd'_{i}\gamma_i(E) \in S$ and $\frac 1 a \fd'_1 \gamma _1(E) \in \fS$. 
Now update $\eps'_{31}$ by $\eps_{31}$ and set $ \fd' : = b \fb_0+  \sum\limits_{i \geq 1} \fd'_{i}\gamma_i(E)$. 
Let us  consider 
$\begin{pmatrix} 1 &  0 & 0 \\ 0 & 1 & (\fa_0(1+ \eps_{31}))^{-1} E^{r+p -s} (1+ \eps_{21})\\ 0 & 0 & 1\end{pmatrix}*_\varphi \cA$, we have 
 $$\cA: = \begin{pmatrix}
a \fa_0    &   \fd & \frac{pa }{d} (1 + \eps'_{13})  \\  0  & E^{r}( c \fc_0  - \frac{1}{\fa_0 a} \fd'  + \eps'_{22} ) & -\frac{p}{d} \fa_0^{-1}  E^r(1+  \eps'_{23})  \\ -{\fa_0} E^{s-p} (1 + \eps_{31}) & -\frac{1}{a} E^{s-p} \fd'    & - \frac{p}{d} E^{s-p} ( 1+ \eps'_{33})  
\end{pmatrix}$$
 where $E^{r}( c \fc_0  - \frac{1}{\fa_0 a} \fd'  + \eps'_{22}) = c \fc_0 E^{r}   + E^r\eps_{22}  - (\fa_0(1+ \eps_{31}))^{-1} E^{r+p -s} (1+ \eps_{21})\frac{1}{a} E^{s-p} \fd' $; and $\eps'_{i 3}$ comes from multiplying 2nd columns $\varphi((\fa_0(1+ \eps_{31}))^{-1} E^{r+p -s} (1+ \eps_{21}))$ to the 3rd column (note that $23$-entry both comes from row operation and column operation. Since $a \delta_{23}\in \varpi pS$, $a \delta_{23}$ can be added into a part of $\eps'_{23}$). 
 Using $r+p -s \geq 2$, $\varphi (E)= p\nu $ and $v_p (a) \leq v_p (b) \leq v_p (d)$, we see that all $\eps'_{ij}\in \varpi p S$. 
Now rewrite $\eps'_{ij}=\eps_{ij} $ and 
make $\begin{pmatrix} 1 &  0 & 0 \\ 0 & \frac{d}{p} & 0 \\ 0 & 0 & 1\end{pmatrix}*_\varphi \cA$,  we get 
$$\cA: = \begin{pmatrix}
	a \fa_0    &   \frac{p}{d}\fd & \frac{pa }{d} (1 + \eps_{13})  \\  0  & E^{r}( c \fc_0  - \frac{1}{\fa_0 a} \fd'  + \eps_{22})  & -\fa_0^{-1}  E^r(1+  \eps_{23})  \\ -{\fa_0} E^{s-p} (1 + \eps_{31}) & -\frac{p}{da} E^{s-p} \fd'    & - \frac{p}{d} E^{s-p} ( 1+ \eps_{33})  
\end{pmatrix}$$
Note that $\fd_{10}= d$. So $\frac{p}{d}\fd = b \frac p d \fb_0 + E^p + \tilde \eps_{12}$ with $\tilde \eps_{12} \in \varpi \fS +  I$. Since $\frac{\fd'_0}{a}\equiv \frac b a  \mod p$,  $\frac 1 a \fd'_1 \gamma _1(E) \in \fS$ and $\frac 1 a \sum_{i \geq 2} \fd_i \gamma _i (E)\in \Fil^{2p} S$ by \eqref{eqn-b'}, we have $-\frac{p}{da} E^{s-p} \fd' \in\varpi \fS+  I$ and rewrite 
 $E^{r}( c \fc_0  - \frac{1}{\fa_0 a} \fd') = E^r (c\fc_0 -\frac{1}{\fa_0 a} (\fd_0' + \fd'_1 \gamma_1(E))) + E^r \eps''_{22}$ with $\eps''_{22} \in  \Fil ^{2p} S$. Write $ \bar  \beta u ^p= \frac 1 a \fd'_1 \gamma _1(E) \mod \varpi $. Note that $\bar \beta = 0$ if $v_p (b) > v_p (a)$ and $\fa_0 \equiv 1 \mod p$. Now we have two cases to discuss:
 
 %It is also easy to check that $-\frac{p}{da} E^{s-p} \fd' \in I$ using that $\frac{1}{a} \fd'  \in S$ and $\frac 1 a \fd'_1 \gamma _1(E) \in \fS$. 

Case (1): if $v_p (a)= v_p (b)$. Let $\overline{a/b}: = a/b \mod \varpi$. Then $\bar \cA = \begin{pmatrix}0 &   u ^p & 0  \\ 0  &  -\overline{a/b} u ^r - \bar \beta u ^{r+p}  & u ^r \\ u ^{s-p}  & 0 & 0 \end{pmatrix}$ when $A$ is type I. If $A$ is type III, let $\bar \alpha = c - \frac b  a \mod \varpi$ then $\bar \cA = \begin{pmatrix}0 &   u ^p & 0 \\ 0 & \bar \alpha u ^r  - \bar \beta u ^{r+p} & u ^r  \\ u ^{s-p} & 0 & 0 \end{pmatrix}$. 

Case (2): if $v_p (a) < v_p (b)$.  Then $\bar \cA = \begin{pmatrix}0 &   u ^p & 0 \\ 0  &  0  & u ^r \\ u ^{s-p} & 0 & 0 \end{pmatrix}$ for type I and  $\bar \cA = \begin{pmatrix}0 &   u ^p & 0 \\ 0  &  \bar c u ^r  & u ^r \\ u ^{s-p} & 0 & 0 \end{pmatrix}$ for type III with $\bar c = c \mod \varpi$. 

\subsubsection{$v_p (b') < 1$ and $v_p (a)> v_p (b)$.}\label{subsubsec-hardest}The most difficult case is that $v_p (b)< v_p (a)$ in which case $v_p (b')= v_p (b)= v_p (d)$. We return to the matrix \eqref{eqn-Matrix},   apply  $\begin{pmatrix}
1  &  0   & 0 \\  0 & 1  & 0 \\ 0   & 0 &  \frac{d}{pb} 
\end{pmatrix} *_\varphi \cA $, then  \begin{equation}\label{eqn-hardcase}\cA= \begin{pmatrix}
a \fa_0    &   \fd & \frac{pb}{d} (1 + \eps_{13})  \\  E^r(1+ \eps_{21}) &  c \fc_0 E^{r}   + E^r \eps_{22}  & \frac{pb}{d} E^r  c\delta_{23} \\  E^{s-p} (-\frac {\fa_0a}{b} + \frac{p}{b}\eps_{31}) & -\fb_0E^{s-p}-\frac{1}{b} E^{s-p} \sum\limits_{i \geq 1} \fd'_{i}\gamma_i(E)   & - \frac{p}{d} E^{s-p} ( 1+ \eps_{33})  
\end{pmatrix}\end{equation} and make $\begin{pmatrix}
1  &  0   & \frac{d}{\fb_0p}  E^{2p -s} \\  0 & 1  & 0 \\ 0   & 0 &  1 
\end{pmatrix} *_\varphi \cA$, we obtain 
$$\cA= \begin{pmatrix}
a \fa_0  + E ^p\frac{d}{\fb_0p} (-\frac {\fa_0a}{b} + \frac{p}{b}\eps_{31})      &   \fd - \frac{d}{p }E^{p}-\frac{d}{pb\fb_0} E^{p} \sum\limits_{i \geq 1} \fd'_{i}\gamma_i(E) &  - {\fb_0}^{-1} E^{p} + \frac{pb}{d} + \eps'_{13}  \\  E^r(1+ \eps_{21}) &  c \fc_0 E^{r}   + E^r \eps_{22}  & \frac{pb}{d} E^r c\delta'_{23} \\  E^{s-p} (-\frac {\fa_0a}{b} + \frac{p}{b}\eps_{31}) & -\fb_0E^{s-p}-\frac{1}{b} E^{s-p} \sum\limits_{i \geq 1} \fd'_{i}\gamma_i(E)   & - \frac{p}{d} E^{s-p} ( 1+ \eps'_{33})  
\end{pmatrix}$$ 
where $\eps'_{i 3}$ and $\delta'_{23}$ come from multiplying $\varphi (\frac{d}{\fb_0p}  E^{2p -s})$ to the last columns. Since $2p-s \geq 4$, we see that $\eps'_{i3}\in p\varpi S $ and $\delta'_{23}\in pS$. Note that 
$\frac{d}{pb\fb_0} E^{p} \sum\limits_{i \geq 1} \fd'_{i}\gamma_i(E)= \frac{d}{b\fb_0} \gamma _1 (E)  \sum\limits_{i \geq 1} \fd'_{i}\gamma_i(E)\in \varpi \Fil^{2p} S$ and $\fd - \frac{d}{p }E^{p} = \fd - \frac{\fd_{10}}{p }E^{p}= a_{12} + \delta_ {12}$ with $a_{12} \in \varpi \cO[u] $ and $\delta_{12} \in \varpi \Fil ^{2p}S$ (recall $d= \fd_{10}$). 
 By \eqref{eqn-b'},  $\frac 1 b \sum\limits_{i \geq 1} \fd'_{i}\gamma_i(E) = \frac{\fd'_1}{b}\gamma_1 (E) + \frac 1 b  \sum\limits_{i \geq 2} \fd'_{i}\gamma_i(E)$ with $\frac{\fd'_1}{b}\gamma_1 (E) \in E^p  \cO[u]$ and $\delta_{32}: = \frac 1 b  \sum\limits_{i \geq 2} \fd'_{i}\gamma_i(E)\in \Fil ^{2p} S$. Since $\fb_0 \equiv 1 \mod p$ by Lemma \ref{lem-estimate}, if we write 
$\fb_0+ \frac{1}{b} \sum\limits_{i \geq 1} \fd'_{i}\gamma_i(E) : = \nu _{32}$ then $\nu_{32} \in S ^\times$ and $\nu_{32}-1 \in (p, E^p) \fS + \Fil^{2p} S$.

In case that $v_p (a)>  1$,  inside  $11$-entry, we have $E ^p\frac{d}{\fb_0p} \frac {\fa_0a}{b}  \in \varpi \cO[u]$. So $\cA $ satisfies the requirements of \S \ref{subsec-algorithm}, we have 
$\bar \cA = \begin{pmatrix}0 &   0 & - u ^p  \\ u ^r   &  0  & 0  \\ 0  & -u ^{s-p}\bar \nu_{32} & 0 \end{pmatrix}$ for type I and  $\bar \cA = \begin{pmatrix}0 &   0 & - u ^p  \\ u ^r   &  \bar c u ^r  & 0  \\ 0  & -u ^{s-p}\bar \nu_{32} & 0 \end{pmatrix}$ for type III, where $\bar \nu_{32} \equiv 1 \mod u ^p\in \F [\![u]\!]^\times$.

So we can assume that $v_p (a)\leq 1$ from now on. To simplify notations, let $\wt a :  = \frac{a\fa_0}{b}$,   $d^{(1)}: = d \fb_0^{-1}$, $\mu_{11}: =1 - \frac {p}{a\fa_0} \eps_{31}$ and $\mu_{21}	: = 1 + \eps_{21}$.   Rewrite $$\fd - \frac{d}{p }E^{p}-\frac{d}{pb\fb_0} E^{p} \sum\limits_{i \geq 1} \fd'_{i}\gamma_i(E)= \sum_{i \geq 0} \fd''_i \gamma_i (E)= \fd^{(1)} +  \sum_{i \geq 2} \fd''_i \gamma_i (E): =\fd^{(1)} + \eps_{12}$$
where $\fd^{(1)} : = \fd''_0 + \fd''_1 \gamma_1(E) \in \varpi \cO [u]$ and $\eps _{12}: = \sum_{i \geq 2} \fd''_i \gamma_i (E) \in \varpi \Fil ^{2p} S$. Also set  $\wt c :  = c \fc_0$, $\eps_{23}: = b \delta_{23} \in I$ and update  $\eps'_{ij}$  to $\eps_{ij}$. 
%we have  
%$$\cA= \begin{pmatrix}
%a \fa_0  - E ^p\frac{d}{\fb_0p} \frac {\fa_0a}{b} (1+ \eps'_{31})      &   \fd - \frac{d}{p }E^{p}-\frac{d}{pb\fb_0} E^{p} \sum\limits_{i \geq 1} \fd'_{i}\gamma_i(E) &  - {\fb_0}^{-1} E^{p} + \frac{pb}{d} + \eps'_{13}  \\  E^r(1+ \eps_{21}) &  c \fc_0 E^{r}   + E^r \eps_{22}  & \frac{pb}{d} E^r c \delta'_{23} \\ -\frac {\fa_0a}{b}  E^{s-p} (1+ \eps'_{31}) & -\fb_0E^{s-p}-\frac{1}{b} E^{s-p} \sum\limits_{i \geq 1} \fd'_{i}\gamma_i(E)   & - \frac{p}{d} E^{s-p} ( 1+ \eps'_{33})  
%\end{pmatrix}$$ 
 Now we simplify our matrix to the following form: 
\begin{equation}\label{eqn-matrix-(1)}
\cA= \begin{pmatrix}
a \fa_0  - E ^p\frac{d^{(1)}}{p} \wt a  \mu_{11}      &   \fd^{(1)} + \eps_{12} &  - {\fb_0}^{-1} E^{p} + \frac{pb}{d} + \eps_{13}  \\  E^r\mu _{21} &  E^{r} \wt c    +  E^{r} \eps_{22}  & \frac{p}{d} E^r \eps_{23} \\ -\wt a   E^{s-p} \mu _{11} & -E^{s-p}\nu_{32}    & - \frac{p}{d} E^{s-p} ( 1+ \eps_{33})  
\end{pmatrix}
\end{equation}
Now we kill $ E ^p\frac{d^{(1)}}{p} \wt a  \mu_{11} $ in 11-entry by applying 
$\begin{pmatrix}
1  &   E^{p -r} \mu^{-1}_{21}  \frac{d^{(1)}}{p} \wt a  \mu_{11}  & 0  \\  0 & 1  & 0 \\ 0   & 0 &  1 
\end{pmatrix} *_\varphi \cA$, and note that $\varphi (E^{p-r}) \in p ^ 2 S$ as $r \leq p -2$, we update $\cA$ by 
$$\cA= \begin{pmatrix}
a \fa_0      &   \fd^{(1)} +  E^{p} \mu^{-1}_{21} \mu_{11}  \frac{\wt c d^{(1)}\wt a }{p} +   \eps'_{12} &  - {\fb_0}^{-1} E^{p} + \frac{pb}{d} + \eps'_{13}  \\  E^r\mu _{21} &   E^{r}\wt c     + E^r \eps' _{22}  & \frac{p}{d} E^r \eps_{23} \\ -\wt a   E^{s-p} \mu _{11} & -E^{s-p}\nu'_{32}     & - \frac{p}{d} E^{s-p} ( 1+ \eps_{33})  
\end{pmatrix}, $$ 
here $\eps'_{13}$ is updated by adding  $E^{p -r} \mu^{-1}_{21}  \frac{d^{(1)}}{p} \wt a  \mu_{11}$ multiplying  2nd row to first row (note $v_p (d^{(1)})= v_p (d)$), and $\eps'_{22}, \mu  '_{32}$  is updated by  adding  $-\varphi (E^{p -r} \mu^{-1}_{21}  \frac{d^{(1)}}{p} \wt a  \mu_{11}) $ multiplying  1st column to 2nd column, $\eps'_{12}$ is added by both above  row operation (note that $E^r E ^{p-r} = E^p \in pS$) and column operation. Also $\nu'_{32}$ is updated so that $ \nu' _{32}-1 \in (p, E^p) \fS + \varpi pS + \Fil ^{2p} S$. 
 %Now compare to the above matrix to that in \eqref{eqn-hardcase}, we see the difference up to  the change of $\nu_{ij}$, $\eps_{ij}$ and $\delta_{ij}$, is the $E^p$-term of 12-entry: $\frac{\fd_{10}}{p}$ is replaced by  $\mu^{-1}_{21} \mu_{11}  \frac{\wt c d^{(1)}\wt a}{p}$.  
Update the matrix $\cA$ by removing all $'$ and $\begin{pmatrix}
1  &   0 & E^{2p-s}\nu_{32}^{-1}\mu^{-1}_{21} \mu_{11}  \frac{\wt c d^{(1)}\wt a }{p}  \\  0 & 1  & 0 \\  0  & 0 &  1 
\end{pmatrix} *_\varphi \cA$, we get 
$$\cA= \begin{pmatrix}
a \fa_0    + E^p \mu'_{11} \frac{\wt c d^{(1)} \wt a ^2}{p}  &   \fd^{(1)} + \eps_{12} &    	-{\fb_0^{-1}}E^{p} + \frac{pb}{d} + \eps'_{13}  \\  E^r\mu _{21} &   E^{r}\wt c     + E^r \eps_{22}  & \frac{pb}{d} E^r\eps'_{23} \\ -\wt a   E^{s-p} \mu _{11} & -E^{s-p}\nu_{32}    & - \frac{p}{d} E^{s-p} ( 1+ \eps'_{33})  
\end{pmatrix},$$ 
where $ \mu'_{11}: = \nu_{32}^{-1}\mu^{-1}_{21} \mu_{11}$, and $\eps'_{23}$ and $\eps'_{33}$ is updated from column operation by using $\varphi (E^{2p-s})\in p ^4 S$, $\eps'_{13}$ is updated from both row operation  (using $v_p(d^{(1)})= v_p (d)$) and column operation.  Compare the above matrix to that in \eqref{eqn-matrix-(1)}, except updating $  \eps_{ij}, \mu _{ij} $ and $\nu_{32}$, we see the essential difference is the (11)-entry that $d^{(1)}\wt a $ is replaced by 
$d ^{(1)} \wt c \wt a^2$. So we may repeat operations after \eqref{eqn-matrix-(1)}, then 11-entry is replaced by $d ^{(1)} \wt c^m  \wt a^{m +1}$, which is eventually go to $0$ when $m \to \infty$. Then we get the $\bar \cA = \begin{pmatrix}0 &   0 & - u ^p  \\ u ^r   &  0  & 0  \\ 0  & -u ^{s-p}\bar \nu_{32} & 0 \end{pmatrix}$ for Type I and $\bar \cA = \begin{pmatrix}0 &   0 & - u ^p  \\ u ^r   &  \bar c u ^r  & 0  \\ 0  & -u ^{s-p}\bar \nu_{32} & 0 \end{pmatrix}$ for Type III.

\subsection{Type II matrix}\label{subsec-TypeII} To simplify our notations,  our notations in this subsection are independent of those in the last subsections unless otherwise stated.
We rewrite matrix \eqref{eqn-A-II} to the following new form
  \begin{equation}\label{eqn-A-II-new}
\cA= \begin{pmatrix}
a \fa'  & 1 & c \fc'  \\  b E^r \fb '  &  0   & E ^r  \\ E^s & 0& 0 
\end{pmatrix}
\end{equation}
Set $b': = ra + (s-r)bc $. Here we consider two situations of $a, b,c$: 
\begin{enumerate}[label=(\roman*)]
	\item $a, b  , c \in \varpi \cO_F$; 
	\item  \label{situtaion-2}$a, b \in \varpi \cO_F, c \in F$ satisfying $p c \in \varpi \cO_F$ and $v_p (b')< 1$, with two cases: 
	\begin{enumerate}
\item $ v_p (b) < \min \{v_p (a), 1 \}$ and $v_p (b  c^2)> 0$ 
\item $v_p (a )< \min \{v_p (b), v_p (bc), v_p (pc)\}$ and $v_p(ac)> 0$. 
	\end{enumerate}
\end{enumerate}
Note in the Situation \ref{situtaion-2}, we always have $v_p (ac), v_p (b c), v_p (b c^2), v_p (b'c), v_p (pc / b')> 0 $. 	
	
%	$ 0 < v_p (a), v_p (b ),  v_p(b'), v_p( b'/c), v_p ( a c)  , v_p (b c), v_p (b'c) < 1.$
%Situation (i): ,  and Situation (ii): $a, b \in \varpi \cO_F, c \in F$ but $a c , b c \in \varpi \cO_F$. \\

We first  have $\begin{pmatrix}
1  &  - c (\sum\limits_{i \geq 1} \fc'_i \frac{ E^{p i -r}}{p ^i})  & 0 \\  0 & 1  & 0 \\ 0 & 0 & 1 
\end{pmatrix} *_\varphi \cA  = \begin{pmatrix}
 \fd    &   1 + \eps_{12}  & c \fc'_0 \\ b E^r \fb' &   E^{r}  \eps_{22}  & E^r \\ E^s & E^s \wt \eps_{32}  & 0  
\end{pmatrix}$ where  $\eps_{i 2}$ comes from adding multiple $ c \varphi ( \sum \limits_{i \geq 1}\fc'_i \frac{ E^{p i -r}}{p ^i})$ of first columns to the second column,  %$\eps_{12} = a\fa_0 a \varphi ( \sum _{i \geq 1}\fa_i \frac{ E^{p i -r}}{p ^i}) $ 
and 
\[\fd :  = a \sum_{i \geq 0} \fa'_i \gamma_i (E) - bc \left (\sum_{i\geq 0 }\fb'_i \gamma _i (E) \right) \left (\sum_{i \geq 1} \fc'_i \gamma_i (E)\right )   = 
a\fa'_0 + \sum_{i \geq 1} \fd_i \gamma_i (E). \]
Write $\fd_i = \sum\limits_{j = 0}^{p-1}\fd_{ij} u ^j $ with $\fd_{ij} \in \cO$, $\lambda: = \min \{v_p (bc), v_p (a)\}> 0$ then 
it is easy to check by Lemma \ref{lem-estimate} that 
\begin{equation}\label{eqn-b'-II}\fd_{10} \equiv ra  + (s-r)bc= b'  \mod \varpi p; \ \   v_p (\fd_{1j})  \geq 1 + \lambda, \  1 \leq j \leq p-1 \text{ and } v_p(\fd_{ij})\geq \lambda,\ i \geq 2.   \end{equation}  
 It is not hard to check that $\eps_{i2}\in \varpi p S$ for Situation (i) and $\eps_{12},  \eps_{22} \in \varpi pS\subset I$ (using the fact that $v_p (bc^2), v_p (ac), v_p (bc)> 0$) and $\wt \eps_{32}\in cp S$ for Situation
\ref{situtaion-2}.

Now update $\cA$ by $\begin{pmatrix}
1  &  0   & 0 \\  0 & 1  & - b \sum\limits_{i \geq 1} \fb '_i \frac{E^{ip +r -s}}{p ^i} \\ 0 & 0 & 1 
\end{pmatrix} *_\varphi \cA $, we obtain $\begin{pmatrix}
\fd     &   1 + \eps_{12}   & c\fc'_0 + \eps_{13}    \\   b \fb'_0 E^r &  E^r \eps'_{22}  & E^r( 1+ \eps _{23} )  \\ E^s  & E^s\wt \eps_{32}  & E^s \eps_{33}  
\end{pmatrix}$
where $\eps_{j3} =  \alpha_j b\sum\limits_{i \geq 1}  \varphi (\fb' _i \frac{E^{ip +r -s}}{p ^i})\in \varpi pS\subset I $ where $\alpha_j$ from second column and $E^r \eps'_{22}  $ is updated by adding $  E^s \wt \eps_{32} \sum\limits_{i \geq 1}  b (\fb' _i \frac{E^{ip +r -s}}{p ^i}) \in \varpi p E^r S \subset  E^r I.$ 

The following two cases are under situation (i), that is, $c  \in \varpi \cO_F$.  

Case (1): If $v_p (b ')>1$ then $\bar {\cA}  = \begin{pmatrix}0 & 1 & 0  \\ 0  & 0 & u ^r  \\ u ^s  & 0  & 0 \end{pmatrix}$. 

Case (2): If $v_p (b')=1$ then let $\bar b' = \frac{b'}{p}\mod \varpi$ then   we have 	
$\bar {\cA}  = \begin{pmatrix}\bar b' u ^p  & 1 & 0 \\ 0  & 0 &  u ^ r \\ u ^s  & 0  & 0 \end{pmatrix}$.   

Now the real challenging case is when $v_p(b ') < 1$ for both situations, which we will assume from right now. Write $d : = \fd_{10} $, first note in this case $v_p (b') = v_p(d)$ by \eqref{eqn-b'} and then $\fd_1 = d \mu_{\fd}$ with $\mu_{\fd} \equiv 1 \mod \varpi \in \fS^\times$. If we write $\frac{1}{\fd_1}$ in the following then it should be understood as 
$\frac 1 d \mu_{\fd}^{-1}$. 
Our next row operation is to kill $E^s$-term.  Consider 
$\begin{pmatrix}
1  &  0   & 0 \\  0 & 1  & 0 \\ -\frac{p}{\fd_1} E^{s-p}  & 0 & 1 
\end{pmatrix} *_\varphi \cA $, we obtain 
\[
\cA = \begin{pmatrix}
\fd  + p\eps _{11}  &   1 + \eps_{12}  &   c \fc_0 ' + \eps _{13}\\  E^r (b\fb'_0 +  p  \eps_{21} )   &  E^r \eps'_{22}  & E^r ( 1+ \eps_{23} )  \\ -\frac{p}{\fd_1} E^{s-p} (a\fa_0'+  \sum\limits_{i \geq 2} \fd_i  \gamma_i (E) + p \eps_{31} )  & -\frac{p}{\fd_1} E^{s-p} (1 + \eps'_{32})     & - \frac{p}{\fd_1} E^{s-p} (c \fc'_0 + \eps'_{33})   
\end{pmatrix}
\]
where  $\eps'_{32}, \eps'_{33}$ are updated from row operation, and use the fact that $E^s = E^{s-p} E ^p \in p E ^{s-p}S$; Note that $\eps'_{32}$ is in $\varpi pS$ because $v_p (b' c)= v_p (dc)>0 $ for Situation \ref{situtaion-2}. 
All terms of $\eps_{i 1}$ comes from column operation which add $3$rd columns multiplying 
$\frac p d \varphi (E ^{s-p}\mu_\fd^{-1})$ to the first column. Note that $\varphi(E^{s-p})\in p ^ 2 S$, this allows we write $p \eps_{i1}$ in the first column. Also Situation \ref{situtaion-2} does not affect this claim because $   v_p (b' /c )= v_p (d/c) < 1 = v_p (p)$. 

 Now apply  $\begin{pmatrix}
1  &  0   & 0 \\  0 & 1  & 0 \\ 0   & 0 &  \frac{d}{p} 
\end{pmatrix} *_\varphi \cA $,  after updating   $\eps_{ij} $ from  $\eps'_{ij}$ if possible, 
we update  \begin{equation}\label{eqn-Matrix-II}
\cA = \begin{pmatrix}
\fd  + p\eps _{11}  &   1 + \eps_{12}  &   \frac p {d} (c \fc_0 ' + \eps _{13})\\  E^r (b \fb'_0 +  p  \eps_{21} )   &  E^r \eps_{22}  & \frac p {d} E^r ( 1+ \eps_{23} )  \\  -E^{s-p} \mu_\fd^{-1} (a\fa_0'+  \sum\limits_{i \geq 2} \fd_i  \gamma_i (E) + p \eps_{31} )  & - E^{s-p} \mu_\fd ^{-1}(1 + \eps_{32})     & - \frac{p}{\fd_1} E^{s-p} (c \fc' _0+ \eps_{33})   
\end{pmatrix}
\end{equation}

Now we first consider the case that $v_p (b) \leq  v_p (a)$. Since $v_p (b') = v_p (d) < 1 $, we have $v_p (b)< 1$ for both Situations (i) and (ii). Using $ \begin{pmatrix}
	b  &  0   & 0 \\  0 & 1  & 0 \\ 0   & 0 &  1 
\end{pmatrix} *_\varphi \cA $, we have 
$$\cA = \begin{pmatrix}
	\fd  + p\eps _{11}  &    b (1 + \eps_{12} )   &    b  \frac p {d} (c \fc_0 ' + \eps _{13}) \\  E^r (\fb'_0 +  \frac p b \eps_{21} )   &  E^r \eps_{22}  & \frac p {d} E^r ( 1+ \eps_{23} )  \\ - E^{s-p}\mu^{-1}_\fd (\frac a b \fa_0'+  \frac 1 b \sum\limits_{i \geq 2} \fd_i  \gamma_i (E) + \frac  p  b \eps_{31} )  & - E^{s-p}\mu_\fd ^{-1} (1 + \eps_{32})     & - \frac{p}{\fd_1}E ^{s-p} (c \fc'_0+ E \eps_{33})   
\end{pmatrix}. $$
Note that in $31$-entry  $\frac 1 b \sum\limits_{i \geq 2} \fd_i  \gamma_i (E) \in  \Fil^{2p} S$ in Situation (i) by \eqref{eqn-b'-II}. But in Situation \ref{situtaion-2}, we only have 
$\frac 1 b\sum\limits_{i \geq 2}\fd_i  \gamma_i (E) \in c \Fil^{2p} S$.   
We rewrite 21-entry by $E^r(\fb' _0 + \varepsilon_{21})$ with $\varepsilon_{21}\in I$. Apply $ \begin{pmatrix}
	1  & -\fd_1 \gamma_1 (E) E^{-r} (\fb_0 '  + \eps_{21})^{-1} &  0 \\  0 & 1  &0 \\ 0   & 0 &  1 
\end{pmatrix} *_\varphi \cA $, we obtain 
$$\cA = \begin{pmatrix}
	\sum\limits _{i \geq 2} \fd_i \gamma_i (E)  + p\eps _{11}  &  \eps'_{12} + b (1 + \eps_{12} )   &  -\mu _\fd \fb_0'  E^p  +   b  \frac p {d} (c \fc_0 ') + \eps' _{13}  \\  E^r (\fb'_0 + \eps_{21} )   &  E^r \eps'_{22}  & \frac p {d} E^r ( 1+ \eps_{23} )  \\ - E^{s-p} \mu _\fd ^{-1}(\frac a b \fa_0'+  \frac 1 b \sum\limits_{i \geq 2} \fd_i  \gamma_i (E) + \frac  p  b \eps_{31})  & - E^{s-p} \mu_\fd^{-1}(1 + \eps'_{32})     & - \frac{p}{\fd_1} E^{s-p} (c\fc '_0+ \eps_{33})   
\end{pmatrix}. $$
where $\eps'_{12}: =  -\fd_1 \gamma_1 (E)  (\fb_0' + \eps _{21})^{-1}\eps_{22} $ plus $\varphi (\fd_1 \gamma_1 (E) E^{-r} (\fb_0 ' + \eps _{21})^{-1} )$ of first column, and $\eps'_{22}, \eps'_{32}$ is updated similar way. Since $\varphi (\fd_1 \gamma_1 (E) E^{-r}) \in \varpi p S $, $\eps'_{12}, \eps'_{22} \in \varpi p S \subset I$, and $\eps'_{32} \in I$ (for Situation \ref{situtaion-2}, use $pc \in \varpi\cO_F$). Also $\eps'_{13}\in \varpi pS$ is also updated form row operation. Note that in $31$-entry  $E^{s-p}(\frac 1 b \sum\limits_{i \geq 2} \fd_i  \gamma_i (E) ) \in  J $ in both Situations so that we can use algorithm in \S \ref{subsec-algorithm}.  Using that $\fb_0' , \mu _\fd\equiv 1 \mod \varpi$ we conclude that for both Situations: 

Case (3): If $v_p (b) < v_p (a)$  then 
 $\bar \cA =   \begin{pmatrix}
	0  &  0  & - u ^p  \\  u ^r & 0  &  0  \\ 0    & - u ^{s-p } &  0 
\end{pmatrix}. $ 

Case (4): If $v_p (b) =  v_p (a)$  then 
$\bar \cA =   \begin{pmatrix}
	0  &  0  & - u ^p  \\  u ^r & 0  &  0  \\  \overline{a/b} u ^{s-p}    & - u ^{s-p } &  0 
\end{pmatrix}$ where $\overline{a/b} = a/b \mod \varpi$.

Now consider that $v_p (a)< v_p (b)$. We note that under this assumptions, the following fact holds for both Situations: 
\begin{equation}\label{Assump-1}
v_p (b c) > v_p (a). 
\end{equation}
%Note that the above assumption is automatic if $c \in \cO_F$. 
Since $v_p (b') < 1$, we see that $v_p (a)= v_p (b')=v_p (d) < 1$ for both Situations. Starting $\cA $ in \eqref{eqn-Matrix-II}, 
 using $ \begin{pmatrix}
a  &  0   & 0 \\  0 & 1  & 0 \\ 0   & 0 &  1 
\end{pmatrix} *_\varphi \cA $, we have 
$$\cA = \begin{pmatrix}
	\fd  + p\eps _{11}  &    a (1 + \eps_{12} )   &    a ( \frac p {d} (c \fc_0 ' + \eps _{13})) \\ \frac 1 a E^r (b \fb'_0 +  p  \eps_{21} )   &  E^r \eps_{22}  & \frac p {d} E^r ( 1+ \eps_{23} )  \\ - E^{s-p} \mu^{-1}_\fd(\fa_0'+  \frac 1 a\sum\limits_{i \geq 2} \fd_i  \gamma_i (E) + \frac  p  a \eps_{31} )  & - E^{s-p}\mu^{-1}_\fd (1 + \eps_{32})     & - \frac{p}{\fd_1} E^{s-p} (c \fc'_0+ \eps_{33})  
\end{pmatrix}$$
By \eqref{eqn-b'-II}, \eqref{Assump-1}, we have $ \frac 1 a\sum\limits_{i \geq 2} \fd_i  \gamma_i (E) \in \Fil ^{2p} S$. Since $\frac p a \in \varpi \cO_F$ and $\fa'_0 \equiv 1 \mod p$ by Lemma \ref{lem-estimate},  we can rewrite $ \mu^{-1}_\fd (\fa_0'+  \frac 1 a\sum\limits_{i \geq 1} \fd'_i  \gamma_i (E) + \frac  p  a \eps_{31}) =1 + \delta_{31} $ with $\delta_{31}\in \Fil^{2p} S+ I+ p \fS$. Let us consider the transformation  
$ \begin{pmatrix}
1  &  0   &  \fd_1 \gamma_1 (E) E^{p-s} (1 + \delta _{31})^{-1} \\  0 & 1  & \frac 1 a E^{r+p-s} (b \fb'_0 + p \eps_{21})(1 + \delta _{31})^{-1} \\ 0   & 0 &  1 
\end{pmatrix} *_\varphi \cA $, we obtain: 
$$\cA = \begin{pmatrix}
a\fa'_0   + \sum\limits_{i \geq 2}\fd_i \gamma_ i (E) + p\eps _{11}  &   -d  \gamma_1 (E) +  a  + \varepsilon' _{12}   &  - E^p (c\fc'_0 ) + \frac{apc}{d} \fc'_0 + \varepsilon' _{13} \\ 0    &  E^r (- \frac {b}{a} \fb'_0 \mu _\fd ^{-1}  + \varepsilon'_{22})  &    \frac p {d} E^r ( 1 - \frac{bc}{a} \fb'_0\fc'_0 \mu _{\fd}^{-1}+  \varepsilon'_{23} )  \\ - E^{s-p} (1+  \delta_{31} )  & - E^{s-p} \mu_\fd ^{-1} (1 + \eps_{32})     & - \frac{p}{\fd_1} E^{s-p} (c \fc'_0+ \eps'_{33} ) 
\end{pmatrix}$$
where $ -d \gamma_1 (E) +  a  + \varepsilon'_{12} = a(1+ \eps_{12}) + \fd_1 \mu^{-1}_\fd\gamma_1 (E) (1 + \delta _{31})^{-1} (1+ \eps_{32})$ and 
$E^r (- \frac {b}{a} \fb'_0\mu ^{-1}_\fd  + \varepsilon'_{22})= -\frac 1 a E^{r} (b \fb'_0 + p \eps_{21})(1 + \delta _{31})^{-1} \mu _{\fd}^{-1}(1+ \eps_{32}) + E^r \eps_{22} $. 
 Using the fact that $(1 + \delta_{31})^{-1}-1 \in p \fS + \Fil^{2p}S + I$  and $\varpi p\fS  + \varpi  \Fil^{2p}S \subset I$, $\varepsilon'_{12}, \varepsilon'_{22} \in I$. 
The last column is updated from both row operation and columns operations: The column transformation just bring some extra elements in $I$ because  $\varphi (\gamma_1(E) E^{p-s})\in p ^3 S$ because $2p-s \geq 4$ and $\frac b   a \varphi (E^{r+p -s}) \in \varpi p ^2 S$. For row operations, the 13-entry is added by  $- E^p (1 + \delta _{31})^{-1}(c\fc'_0 + \varepsilon'_{33})$ and 23-entry is added by $ -\frac {p}{\fd_1 a} E^{r} (b \fb'_0 + p \eps_{21})(1 + \delta _{31})^{-1} (c\fc'_0 + \varepsilon'_{33})$, using that $E^p \in pS$ and $\frac {b c}{a}\in \varpi \cO_F$ by \eqref{Assump-1}, we see that the 13,  23-entries are added by $- E^p c\fc'_0 \mod I$, $-\frac {p}{\fd_1 a} E^{r} b \fb'_0 c\fc'_0 \mod \frac p d E^r I$ respectively.  

%We scale 3rd row by $ - \frac{b \fb'_0}{a} E^{r+p-s}\mod I$, $\fd_1 \gamma_1(E) E^{r-p} \mod I$ and add to  the 2nd row and the 1st row respectively, where 

Now using $ \begin{pmatrix}
1  &  0   & 0 \\  0 & \frac d p  & 0 \\ 0   & 0 &  1 
\end{pmatrix} *_\varphi \cA $ and updating  22-entry by $-E^r (\frac b a \alpha + \eps_{22}) $ with $\eps_{22} \in I$, $\alpha = \fb_0 '\mu_\fd^{-1}\in \fS^\times$ and update other $\varepsilon'_{ij}$ by $\varepsilon_{ij}$ if necessary, 
we obtain
$$\cA = \begin{pmatrix}
	a\fa'_0   + \sum\limits_{i \geq 2}\fd_i \gamma_ i (E) + p\eps _{11}  &   -  E^p + \frac{p}{d} (a  + \eps_{12})   &  - E^p (c\fc'_0 ) + \frac{apc}{d} \fc'_0 + \eps _{13} \\ 0    &  E^r (- \frac {b}{a}\alpha   + \eps_{22})  &     E^r ( 1 - \frac{bc}{a} \fb'_0\fc'_0\mu^{-1}_\fd+  \eps_{23} )  \\ - E^{s-p} (1+  \delta_{31} )  & - \frac{p}{d}E^{s-p} \mu _\fd ^{-1}(1 + \eps_{32})     & - \frac{p}{\fd_1 } E^{s-p} (c \fc'_0+ \eps_{33} ) 
\end{pmatrix}$$
Now there is only $(13)$-entry which has term $-E^p (c\fc'_0 ) $ need to be eliminated for possible Situation \eqref{situtaion-2}. Let $\mu = 1 - \frac{bc}{a} \fb'_0\fc'_0 \mu^{-1}_\fd+  \eps_{23} \in S^\times$. Note that $\mu^{-1} -1 \in I + \varpi \fS$.  
So we use 
 $ \begin{pmatrix}
	1  &  E^{p-r} \mu^{-1} (c  \fc'_0) 
	   & 0 \\  0 & 1  & 0 \\ 0   & 0 &  1 
\end{pmatrix} *_\varphi \cA $. Then we update 
$$\cA = \begin{pmatrix}
	a\fa'_0   + \sum\limits_{i \geq 2}\fd_i \gamma_ i (E) + p\eps _{11}  &   -  E^p (1 + \mu^{-1} c \fc'_0 (\frac b a\alpha - \eps_{22})) +  \frac{p}{d} (a  + \eps'_{12} )   &   \frac{apc}{d} \fc'_0 + \eps _{13} \\ 0    &  E^r (- \frac {b}{a} \alpha  + \eps_{22})  &     E^r \mu   \\ - E^{s-p} (1+  \delta_{31} )  & - \frac{p}{d}E^{s-p} \mu^{-1}_\fd (1 + \eps'_{32})     & - \frac{p}{\fd_1} E^{s-p} (c \fc'_0+ \eps_{33} ) 
\end{pmatrix}$$
 Note that $\varphi (E ^{p-r}c)\in p ^2 c S \subset \frac p d (\varpi pS)$ also for Situation \ref{situtaion-2} (using $v_p (a) = v_p (d)$ and $v_p (ac)>0 $). This allows us update $ \eps_{12}$, $\eps_{32}$ to $\eps'_{12}$, $\eps'_{32}$ respectively by column transformation. Now $\cA$  satisfies the requirement of Proposition \ref{prop-algorithm-OK}. Then 
 $\bar \cA =   \begin{pmatrix}
	0  &  - u ^p   & 0 \\  0 & 0   &  u ^r \\- u ^{s-p }   & 0 &  0 
\end{pmatrix}$. 

\section{Residue Galois representations} \label{sec-Galoisrep}
In this section, we discuss all possible semi-simplification of $\Vbar$. Since we will discuss all types $A$ in Theorem \ref{them-classification-SD-lattices}, we will use 
${\rm I} (a_1, b_1 , c_1)$, ${\rm II} (a_2, b_2, c_2)$, ${\rm III} (a_3, b_3, c_3)$ to distinguish parameters from   different types. Similar notations applies to parameters $b'$ in the previous sections

\subsection{Functor $V^*_{\F}$} We collect some information on $V_\F^*$ to compute the reduction $\Vbar$ of $V$.  Pick $p _i \in \Qpbar$ so that $p_0 = -p$ and $p _{i +1}^p = p_ i$. Let $\Q_{p, \infty} = \bigcup_{n =1}^\infty \Q_p(p _i)$ and $G_\infty : = \Gal (\Qpbar /\Q_{p, \infty})$. Let $\underline{p}= (p _i)_{i \geq 0} \in \cO_{\C_p}^\flat$. Write $k= \F_p = \Z/ p \Z$.  The embedding 
$\ku \to \cO _{\C_p}^\flat$ via $u \mapsto \underline p$ is compatible with $\varphi$ and $G_\infty$. Let $(\fN , \varphi_\fN)$ be a Kisin module killed by $p$, which means that $\fN $ is a finite free $\F [\![u]\!]$-module together a semi-linear $\varphi_{\ku}$-linear, $\F$-linear  endomorphism $\varphi_{\fN  }: \fN  \to \fN $ so that the cokernel of linearization $1\otimes \varphi _{\fN }: \ku \otimes_{\varphi, \ku} \fN \to \fN  $ is killed by $u ^h$. Then we define 
$$V^*_\F(\fN  ): = \Hom_{\ku , \varphi} ( \fN , \cO_{\C_p}^\flat). $$
It is well-known that $V^*_{\F}(\fN)$ is an $\F$-representation of $G_\infty$,  $V^*_\F (\fN)$ only depends on the attached \'etale $\varphi$-module $\fN [\frac 1 u]$ and $V^*_\F(\fN  )= \Hom_{\ku[\frac 1 u] , \varphi} ( \fN[\frac 1 u] , \C_p^\flat) $. In particular, let $T \subset V$ be a $G_\infty$-stable $\Z_p$-lattice with $\M= \M(T) $ the attached Kisin module and set $\overline \M: = \M / \varpi \M$,   then $V_\F^* (\overline \M)\simeq T/ \varpi T$ by \cite[Cor. 2.3.2]{BergdallLevin-BLZ}. As discussed \emph{loc.cit}, the semi-simplification of $\overline V$ is the same as that of $V_\F^* (\overline \M)$. 
Write $I _{\Q_p}$ the inertia subgroup of $G_{\Q_p}$
and $\omega_d: I_{\Q_p}\to \overline{\F_p}^\times$ the fundamental character of level $d$. Recall that $\omega_d (g)= \frac{g (\pi )}{\pi}\mod p$ where $\pi$ is a fixed $p ^d-1$-th root of $-p$. 
\begin{prop}\label{prop-inertia-weight} Let $\fN $ be a Kisin module killed by $p$ with basis $e_1 , \dots , e_d$. Assume that $\varphi (e_i) = u ^{a_i} e_{i +1}$ with $e_{d+1} = e_1$ and $ 0 \leq a_i \leq p-1 $. If $V_{\F}^* (\fN)$ is irreducible then $V_\F^* (\fN)$ is an induction of a  character with weight $(a_{d}, a_{d-1}, \dots, a_1)$ of level $d$. 
\end{prop}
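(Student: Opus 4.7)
The plan is to compute $V_\F^*(\fN)$ explicitly by unfolding its definition, solve the resulting Frobenius equation in $\cO_{\C_p}^\flat$, and then identify the tame character obtained with the appropriate power of $\omega_d$.

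First, a $\ku$-linear, $\varphi$-equivariant map $\phi\colon \fN \to \cO_{\C_p}^\flat$ is determined by the tuple $x_i := \phi(e_i)$, and the relation $\varphi(e_i) = u^{a_i} e_{i+1}$ translates into $x_i^p = u^{a_i} x_{i+1}$. Solving for $x_2, \dots, x_d$ in terms of $x_1$ and imposing the cyclic condition coming from $e_{d+1} = e_1$ collapses these to a single scalar equation
\[
x_1^{p^d} \;=\; u^N x_1, \qquad N := a_d + p\,a_{d-1} + p^2\,a_{d-2} + \cdots + p^{d-1}\,a_1.
\]
Since $\cO_{\C_p}^\flat$ is a domain, solutions are either $0$ or $(p^d-1)$-th roots of $u^N$; the latter form a $\mu_{p^d-1}$-torsor, so the whole solution set $W$ is a $1$-dimensional $\F_{p^d}$-vector subspace of $\cO_{\C_p}^\flat$. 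In particular $V_\F^*(\fN) \cong W$ is $d$-dimensional over $\F_p$, consistent with the rank of $\fN$.

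Next I would identify the Galois action. Since $G_\infty$ fixes every $p_i$ it fixes $\underline p$, so the embedding $u \mapsto \underline p$ is $G_\infty$-equivariant, giving a natural action of $G_\infty$ on $W$. Choose $\pi \in \Qpbar$ with $\pi^{p^d-1} = -p$ (the uniformizer used to define $\omega_d$), and let $x \in W$ correspond to a sequence $(x^{(i)}) \in \cO_{\C_p}^\flat$. The equation $x^{p^d-1} = u^N$ at component zero, combined with $p_0 = -p$, forces $x^{(0)} = \zeta\,\pi^N$ for some $\zeta \in \mu_{p^d-1}$. Hence for $g \in I_{\Q_p}$ one computes
\[
\frac{g(x^{(0)})}{x^{(0)}} \;=\; \Bigl(\frac{g(\pi)}{\pi}\Bigr)^N \;\equiv\; \omega_d(g)^N \pmod{p},
\]
so $W \otimes \overline{\F}$, restricted to $I_{\Q_p}$, decomposes as the sum of Galois conjugates $\omega_d^N, \omega_d^{pN}, \dots, \omega_d^{p^{d-1}N}$.

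Finally, the hypothesis that $V_\F^*(\fN)$ is irreducible, together with the standard classification of irreducible mod-$p$ representations of $G_{\Q_p}$, forces $V_\F^*(\fN)$ to be the induction from $G_{\Q_{p^d}}$ of an unramified twist of a character whose restriction to inertia is $\omega_d^N$; the weight tuple is then the base-$p$ digit expansion of $N$, namely $(a_d, a_{d-1}, \dots, a_1)$. The main subtlety is a careful normalization: matching the embedding $u \mapsto \underline p$ (with $p_0 = -p$) to the uniformizer $\pi$ used to define $\omega_d$, and verifying that the $G_\infty$-character extracted via the tilt extends, via the Kisin-module formalism, to the full $G_{\Q_p}$-action on $V_\F^*(\fN)$ with the honest fundamental character (rather than a Galois conjugate or twist). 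Once this bookkeeping is done, everything else is essentially linear algebra over $\F_{p^d}$.
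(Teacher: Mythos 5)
Your proof is correct and follows essentially the same route as the paper: both unwind the Frobenius recursion $\varphi(x_i)=\underline p^{\,a_i}x_{i+1}$ to the single equation $x_1^{p^d}=\underline p^{\,N}x_1$ with $N=a_d+pa_{d-1}+\cdots+p^{d-1}a_1$, identify the nonzero solutions as a $\mu_{p^d-1}$-torsor, and read off the inertial character $\omega_d^N$ from $g(\underline\pi)/\underline\pi$ (you do this at the zeroth component of the tilt, the paper works directly with $\underline\pi\in\cO_{\C_p}^\flat$, which is only a cosmetic difference). The only slight imprecision is writing $u^{a_i}$ and $u^N$ where $\underline p^{\,a_i}$ and $\underline p^{\,N}$ are meant after the evaluation $u\mapsto\underline p$, but the intent is clear and matches the paper's argument.
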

\begin{proof} This is well-known, for example, in \cite[Prop.3.1.2]{LLL}. For convenience of readers, we sketch the proof here. 
	Let $X =(x_1, \dots, x_d) \in V^*_\F (\fN): = \Hom_{\ku, \varphi} (\fN , \cO ^\flat_{\C_p} ) $ so that $\varphi(x_i) = \underline p ^{a_i} x_{i +1}$. Then we see that 
	$\varphi ^d (x_1) = \underline p  ^{\sum_{j = 1}^d p  ^{d-j} a_j} x_1$. So $x_1$ is a solution of 
	\begin{equation} \label{eqn-tame-inertia}
	x^{p ^d}= \underline p^{\sum_{j = 1}^d p ^{d-j} a_j} x . 
	\end{equation}
	Pick $\upi : = (\pi_i)_{i \geq 0}\in \cO_{\C_p}^\flat$ so that $\pi_0= \pi$ and $\upi^{p ^d-1} = \underline{p}$. Then we can select $x_1 = \upi ^{\sum_{j = 1}^d p ^{d-j} a_j}$ and all other roots in \eqref{eqn-tame-inertia} are $\zeta_{p ^d -1} x_1$ with $\zeta_{p ^d -1}\in \overline{\F_p}^\times$ a $p ^d -1$-th  roots of unity. Since $X$ is determined by $x_1$,  $g(X)$ is determined by $g(x_1)$ for any $g\in G_\infty$. In particular, the $G_\infty$-action on $V^*_\F(\fN)$ factors through $\Gamma_K: = \Gal (K /\Q_p)$, where $K$ is the splitting field of $X^{p^d -1} + p$. Note that $\Gamma _K = \langle \tau \rangle \rtimes I_K$ where $I_K\subset \Gamma_K$ is the inertia subgroup and $\tau\in \Gamma_K$ is a lift of Frobenius. Now proposition follows that for any $g \in I_K$, since $g (\upi) = \omega_d(g) \upi$,  we have that $g (X) = \omega ^{\sum_{j = 1}^d p  ^{d-j} a_j}_d (g)X$. 
\end{proof}

\begin{lemma}\label{lem-row-simplify}Let $A \in \rM _d ^h (\ku)$ with $h \leq 2p -3$ and $X \equiv I _d \mod u ^2$. Then there exists a $ Y \in \GL_d (\ku)$ so that $Y*_\varphi A = XA$. 
\end{lemma}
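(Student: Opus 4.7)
The plan is to look for $Y$ of the form $Y = I_d + u^2 Y_1$ with $Y_1\in \rM_d(\ku)$. Since any such $Y$ is congruent to $I_d$ modulo $u^2$, it is automatically in $\GL_d(\ku)$, so the only issue is solving for $Y_1$. The equation $Y *_\varphi A = XA$ means $YA = XA\,\varphi(Y)$. Writing $X = I_d + u^2 X_1$ with $X_1\in \rM_d(\ku)$ and expanding using $\varphi(u^2) = u^{2p}$, the equation reduces after cancelling $u^2$ to
\[
Y_1 A \;-\; u^{2p-2}\, XA\, \varphi(Y_1) \;=\; X_1 A. \tag{$\ast$}
\]

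Next I would use the height hypothesis to get rid of $A$ on the right. Let $B\in \rM_d(\ku)$ satisfy $AB = BA = u^h I_d$, which exists by assumption on $A\in \rM_d^h(\ku)$. Multiplying $(\ast)$ on the right by $B$ yields
\[
u^h Y_1 \;=\; u^h X_1 \;+\; u^{2p-2}\, XA\, \varphi(Y_1)\, B.
\]
Because $h\leq 2p-3 < 2p-2$, I can divide through by $u^h$ inside $\ku$ and rewrite the equation as a fixed-point problem
\[
Y_1 \;=\; X_1 + T(Y_1), \qquad T(M) := u^{2p-2-h}\, XA\, \varphi(M)\, B.
\]
Conversely, multiplying this fixed-point equation on the right by $A$ and using $BA = u^h I_d$ recovers $(\ast)$, so the two are equivalent.

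Now I would solve the fixed-point equation by iteration. The map $T$ is additive, and if $M\in u^k\rM_d(\ku)$ then $\varphi(M)\in u^{pk}\rM_d(\ku)$, so $T(M)\in u^{pk + (2p-2-h)}\rM_d(\ku)$. The crucial numerical input is precisely the height bound: $h\leq 2p-3$ gives $2p-2-h\geq 1$, and hence
\[
T^n(X_1) \;\in\; u^{(2p-2-h)(1 + p + \cdots + p^{n-1})}\,\rM_d(\ku),
\]
whose $u$-order tends to $+\infty$. Therefore the telescoping series $Y_1 := \sum_{n\geq 0} T^n(X_1)$ converges $u$-adically in $\rM_d(\ku)$ and provides a solution of $Y_1 = X_1 + T(Y_1)$. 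Setting $Y = I_d + u^2 Y_1$ then gives the required element of $\GL_d(\ku)$ with $Y*_\varphi A = XA$.

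The only real obstacle is the legality of the division by $u^h$ in step two; this is where the hypothesis $h\leq 2p-3$ enters, not merely $h\leq 2p-2$. If $h = 2p-2$ the map $T$ would preserve $u$-adic order and the iteration would fail to converge, so the slack of one in the bound is essential. Everything else is formal manipulation of the equation $YA = XA\,\varphi(Y)$ together with the Kisin-module identity $AB = BA = u^h I_d$.
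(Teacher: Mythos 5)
Your proof is correct. The paper's own argument is a multiplicative approximation scheme: it writes $X*_\varphi A = (I_d + u^{h_1}Z_1)XA$ with $h_1 = 2p-h$, puts $Y_1 = (I_d + u^{h_1}Z_1)^{-1}$, and iterates, producing error exponents obeying the recursion $h_{n+1} = ph_n - h$; one checks this sequence diverges exactly when $h \leq 2p-3$, and the desired $Y$ is the infinite product $\cdots Y_2 Y_1 X$. You instead make the additive ansatz $Y = I_d + u^2 Y_1$, reduce to a single linear fixed-point equation $Y_1 = X_1 + T(Y_1)$ where $T(M) = u^{2p-2-h}XA\varphi(M)B$ raises $u$-adic order by at least $2p-2-h \geq 1$, and solve by the Neumann series $Y_1 = \sum_{n\geq 0}T^n(X_1)$. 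Both arguments rest on the same numerical input ($h \leq 2p-3$) and both use a matrix $B$ with $AB = BA = u^hI_d$ to absorb $A$-factors; the underlying contraction principle is identical. Your linearization makes the role of the hypothesis transparent at a glance (one only needs the contraction gain $2p-2-h$ to be positive) and avoids the separate divergence analysis of the recurrence $h_{n+1} = ph_n - h$, while the paper's multiplicative iteration has the virtue of matching the algorithmic style used in Proposition~\ref{prop-cut-tail}, which is presumably why it was chosen there.
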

\begin{proof} Consider $X *_\varphi A = X A \varphi (X^{-1})$. Since $X \equiv I _d \mod u ^2$. We see that $X *_\varphi A = X A + u ^{2p}Z$. Since $A \in \rM ^h_ d (\ku)$, there exists $B$ so that $AB = BA = u ^h I _d$. Hence $$X*_\varphi A = XA +  u ^{2p - h}Z B X^{-1} X A = ( I _d + u ^{h_1} Z_1) XA $$ with $h_1 = 2p -h$ and $Z_1 = ZB X^{-1}$. Let $Y_1 = (I _d + u ^{h_1} Z_1) ^{-1}$ and consider $(Y_1 X) *_\varphi A $, similarly we obtain that $(Y_1 X) *_\varphi A = (I _d + u^{h_2} Z_2) XA$ with $h_2 = p h_1 - h$. Continue this step, we see that $(Y_n Y_{n -1}\cdots Y_1 X)*_\varphi A = (I_d + u ^{h_n} Z_n) XA$ with $h_n = p h_{n -1}-h$. It is clear that $\lim\limits_{n \to \infty} h _n = \infty$ and then 
	$Y = \prod_{n = 1} ^\infty Y_n X$ is required.  
\end{proof}

Let $\Q_{p^f}$ denote the unramified extension of $\Q_p$ with degree $f$. 
Now by the calculation of the previous section, for each types and cases, we have obtained Kisin module $\overline \M$ killed by $p$ by finding the matrix $\bar \cA$ of $\varphi$. Now we can use Proposition \ref{prop-inertia-weight} and Lemma \ref{lem-row-simplify} to calculate semi-simplification of $V^* _\F (\overline \M) $, which is also the semi-simplification $\overline V$ of reduction of $V$. Here we only show the computation in details for cases in \S \ref{subsubsec-first-case} and calculation for  other types and cases are the similar. The first matrix  $\bar \cA$ in \S \ref{subsubsec-first-case} is $\bar \cA = \begin{pmatrix} 0 & 0 & 1 \\ u ^r & 0 & 0 \\ 0 & u ^s & 0 \end{pmatrix}$. Let $Y = [ 1 , u , 1]$. Then $Y *_\varphi \bar \cA = \bar \cA = \begin{pmatrix} 0 & 0 & 1 \\ u ^{r+1} & 0 & 0 \\ 0 & u ^{s-p} & 0 \end{pmatrix}$. Now Proposition \ref{prop-inertia-weight} shows that $\Vbar = \Ind^{G_{\Q_p}}_{G_{\Q_{p ^3}}} \omega_3^{ (s-p) p + (r+1) p ^2}$. For the second matrix $\bar \cA = \begin{pmatrix} 0 & 0 & 1 \\ u ^r & \bar c u^r  & \bar \alpha u ^{r+p} \\ 0 & u ^s & 0 \end{pmatrix}$,  we swap the first and second rows:  $\begin{pmatrix} 0 & 1 & 0 \\ 1 & 0 & 0 \\ 0 & 0 & 1 \end{pmatrix}*_\varphi\bar \cA =  \begin{pmatrix} \bar c u^r & u ^r  & \bar \alpha u ^{r+p}  \\ 0  & 0 & 1 \\ u ^s  & 0 & 0 \end{pmatrix} $. Now update $\bar \cA$ and consider  $ X= \begin{pmatrix} 1 & 0   & 0   \\ 0  & 1 & 0 \\  - {\bar c}^{-1} u ^{s-r}  & 0 & 1 \end{pmatrix}\begin{pmatrix} 1 & 0   & -\bar \alpha u ^{r+p}  \\ 0  & 1 & 0 \\ 0  & 0 & 1 \end{pmatrix} $. By Lemma \ref{lem-row-simplify}, we can update $\bar \cA$ by $X\bar \cA =\begin{pmatrix} \bar c u^r & u ^r  & 0 \\ 0  & 0 & 1 \\ 0   & {\bar c}^{-1} u ^s & 0 \end{pmatrix} $. Now $V^*_\F(\overline \M)$ has two irreducible parts: The first one is determined by $\varphi (e_1) = \bar c u ^r$. So it is $\chi_{\bar c}\omega_1^r$ where $\chi _{\bar c}$ is the unramified character which sends Frobenius to $\bar c$; The second one is determined by matrix $\begin{pmatrix} 0 & 1 \\ \bar c^{-1} u ^s & 0
\end{pmatrix}$. Let $x\in \overline \F_p$ so that $x^2 = \bar c$. We have $\begin{pmatrix} u  & 0 \\0 & x\end{pmatrix}*_\varphi \begin{pmatrix} 0 & 1 \\ \bar c^{-1} u ^{s} & 0
\end{pmatrix}= x^{-1} \begin{pmatrix} 0 & u \\ u ^{s-p} & 0 \end{pmatrix}$. So this part is $ \chi_{x^{-1}} \Ind^{G_{\Q_p}}_{G_{\Q_{p ^2}}} \omega_2^{1 + (s-p) p }= \chi_{x^{-1}} \Ind^{G_{\Q_p}}_{G_{\Q_{p ^2}}} \omega_2^{sp}$. In summary, for the second matrix in \S \ref{subsubsec-first-case}, $\Vbar= \chi \Ind^{G_{\Q_p}}_{G_{\Q_{p ^2}}} \omega_2^{p + (s-p) p } \oplus \chi^{-2} \omega_1 ^{r}$ with $\chi = \chi_{x^{-1}}$. 

Here we also explain why unit $\bar \nu_{32}\equiv 1 \mod u ^p\in \F[\![u]\!]^\times$ in \S \ref{subsubsec-hardest} can be ignored from $\bar \cA$ 0. Indeed, set $z = \prod\limits_{i = 0}^\infty \varphi^{3i} (\bar \nu_{32}^{-1})$, 
$x= \varphi (z), y = \varphi ^2 (z)$.  Then we have 
$$[x, y , z]*_{\varphi} \begin{pmatrix} 0 &  0 & - u ^p \\ u ^r & 0 & 0 \\ 0 & - u ^{s-p} \bar \nu_{32} & 0 \end{pmatrix}= \begin{pmatrix} 0 &  0 & - u ^p \\ u ^r & 0 & 0 \\ 0 & - u ^{s-p}  & 0 \end{pmatrix}. $$

Now we summarize all types of $\overline V$ by two categories: reducible cases and irreducible cases in the next two subsections. Since $b'$ plays very important role in each types, we recall their definitions here. 
$$b'_1 = (s-r)b_1 - sa_1c_1; \ \  b'_2 = ra_2 + (s-r) b_2 c_2; \ \ b'_3 = (s-r)b_3 c_3 -s a_3 \mu.  $$

\subsection{Reducible reduction}\label{subsec-reducible-red} In the following we use $\chi$ to denote a unramified character. Note that in this following cases, $\chi$ may not be the same unless it appears in one equation.  
\subsubsection{$\Vbar=  \chi \Ind^{G_{\Q_p}}_{G_{\Q_{p ^2}}} \omega_2^{p + r p } \oplus \chi^{-2} \omega_1 ^{s-p}$} This happens for Type I and $v_p (b _1' )= 1$.
\subsubsection{$\Vbar= \chi \Ind^{G_{\Q_p}}_{G_{\Q_{p ^2}}} \omega_2^{p + (s-p) p } \oplus \chi^{-2} \omega_1 ^{r}$} This happen for 
\begin{enumerate}
\item 	Type I when $V_p(b'_1)< 1$ and $v_p (a_1) = v_p (b_1)$ . 
\item Type II when $V_p(b'_2)< 1$ and $v_p (a_2) = v_p (b_2)$ .
\item Type III when $V_p (b'_3)< 1$ and $v_p (a_3 ) \leq b _3 c_3$ but $v_p (\mu -\frac{b_3c_3}{a_3})= 0$. 	%Recall that $$b'_3 = (s-r)b_3 c_3 -s a_3 \mu. $$
\end{enumerate}

\subsubsection{$\Vbar= \chi \Ind^{G_{\Q_p}}_{G_{\Q_{p ^2}}} \omega_2^{r  + (s-p) p } \oplus \chi^{-2} \omega_1 ^{p}$} This happen for Type II  when $V_p(b'_2)=  1$.  
\subsubsection{$\Vbar= \chi \Ind^{G_{\Q_p}}_{G_{\Q_{p ^2}}} \omega_2^{ s p } \oplus \chi^{-2} \omega_1 ^{r}$} This happen for Type III   when $V_p(b'_3)>  1$ and Type I when $v_p (b '_1)=1$.
\subsubsection{$\Vbar= \chi_1 \chi _2 \omega_1 ^{p }\oplus  \chi_2^{-1} \omega_1^{s-p} \oplus \chi_1 ^{-1} \omega_1 ^{r}$} This happen for Type III   when $V_p(b'_3)= 1$.

\subsection{Irreducible reductions}\label{subsec-irreducible-red} We list all cases when $\overline V$ is irreducible. %Let $\Q_{p ^3}= W (\F_{p ^3})$ be degree 3 unramified extension of $\Q_p$ and 
%$\omega_3 : G_{\Q_{p ^3}} \to \F_{p ^3}$ is fundamental character of level 3. Since $\Vbar$ is irreducible, it is well-known that $\Vbar = \chi \Ind^{G_{\Q_p}}_{G_{\Q_{p ^3}}} \omega_3^ m$ with $\chi$ an unramified character.  To simplify the notations in this subsection, we just write  $\Ind^{G_{\Q_p}}_{G_{\Q_{p ^3}}} = \Ind $
\subsubsection{$\Vbar = \Ind^{G_{\Q_p}}_{G_{\Q_{p ^3}}} \omega_3^{ (s-p) p + (r+1) p ^2}$}\label{subsubsec-irr-1} This happens for Type I when $v_p (b'_1) > 1$. 
\subsubsection{$\Vbar = \Ind ^{G_{\Q_p}}_{G_{\Q_{p ^3}}}\omega_3^{ r p + s p ^2}$}\label{subsubsec-irr-2} This happens for Type II when $v_p (b'_2) > 1$.
\subsubsection{$\Vbar = \Ind ^{G_{\Q_p}}_{G_{\Q_{p ^3}}}\omega_3 ^{p + rp + (s-p) p ^2} $}\label{subsubsec-hard-1} This happens for 
\begin{enumerate}
	\item Type I, $v_p(b'_1) < 1$, $v_p (a_1) < v_p (b_1)$
	\item Type II, $v_p (b'_2) <1$, $v_p (a_2) < v_p (b_2)$. 
	\item Type III, $v_p (a_3) = v_p (b_3 c_3) < 1$ and $ v_p (b_3 c_3 / a _3 -  \mu )> 0$. Note in this case, we have $b'_3 = (s-r) b_3 c_3 - s a_3 \mu$ satisfies 
	$v _p (b'_3)=v_p (r b_3 c_3) < 1$ because $v_p (b_3 c_3 - a_3 \mu) > v_p (b_3 c_3 )$. 
\end{enumerate}
\subsubsection{$\Vbar = \Ind ^{G_{\Q_p}}_{G_{\Q_{p ^3}}}\omega_3 ^{p + (s-p) p + r p ^2}$}\label{subsubsec-hard-2} This happens when 
\begin{enumerate}
	\item Type I, $v_p(b'_1) < 1$, $v_p (a_1) >  v_p (b_1)$
	\item Type II, $v_p (b'_2) <1$, $v_p (a_2) >  v_p (b_2)$. 
	\item Type III, $v_p (b_3 c_3) < 1$, $v_p (a_3)> v_p (b_3 c_3 ). $ Note in this case, $v_p (b'_3) = v_p (b_3 c_3) < 1$. 
\end{enumerate}

Finally, we can use the above computation to study universal crystalline deformation ring constructed in \cite{kisin4}. Let $\bar \rho : G_{\Q_p} \to \GL_3 (\F)$ be a residue representation. If $\bar \rho$ is irreducible then by \cite{kisin4} for Hodge-Tate type ${\bf v}: =\{0 , r, s\}$ there exists the universal crystalline deformation ring $R_{\bar \rho} ^{\bf v}$.

\begin{thm}\label{thm-deformation} Let  $\rho: G_{\Q_p} \to \GL_3 (\cO_F)$ be a crystalline representation with Hodge-Tate weights $\{0, r, s\}$ satisfying $2 \leq r \leq p-2, \ \  2+p \leq s \leq r + p-2. $	Assume that the reduction $\bar \rho: = \rho \mod \varpi $ is irreducible. Then $\Spec( R^{\bf v}_{\bar \rho}[\frac 1 p])$ is connected. 
\end{thm}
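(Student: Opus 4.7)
The plan is to realize the generic fiber $\Spec(R^{\mathbf v}_{\bar\rho}[\tfrac 1 p])$ as the image of a manifestly connected rigid-analytic parameter space built from the three types of matrices in Theorem \ref{them-classification-SD-lattices}. Fix one of the four irreducible residual shapes $\bar\rho$ listed in \S \ref{subsec-irreducible-red}. By Theorem \ref{them-classification-SD-lattices}, any crystalline lift of $\bar\rho$ with Hodge-Tate weights $\mathbf v$ (possibly over a finite extension of $F$) comes from a strongly divisible lattice whose Frobenius matrix is of one of the three types, encoded by a triple $(a,b,c)$. The case analysis in \S \ref{subsec-irreducible-red} shows that the subset $U_T^{\bar\rho}$ of the parameter space of Type $T$ giving reduction $\bar\rho$ is cut out by explicit strict valuation inequalities on $(a,b,c)$ and on the derived quantity $b'_T$; each such $U_T^{\bar\rho}$ is a rigid-analytic open in a polydisc and is path-connected.

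I would then upgrade the pointwise construction to a family. Apply Proposition \ref{prop-algorithm-lambda} with coefficient ring $\Lambda$ taken to be a complete noetherian local domain surjecting onto an affinoid algebra presenting $U_T^{\bar\rho}$, with $a,b,c$ as universal coordinates. Because the algorithm of \S \ref{subsec-algorithm} depends only on the \emph{valuations} of the parameters, which are constant on $U_T^{\bar\rho}$, the estimates of Lemma \ref{lem-estimate} and Proposition \ref{prop-cut-tail} apply uniformly. This yields a Kisin module over $\fS_\Lambda$, hence a crystalline Galois family lifting $\bar\rho$, and via the universal property of Kisin's deformation ring a morphism
\[
f_T : U_T^{\bar\rho} \longrightarrow \Spec(R^{\mathbf v}_{\bar\rho}[\tfrac 1 p]).
\]
Together, the three $f_T$ hit every closed point of the target.

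It remains to glue the three images. The rational changes of basis in \S \ref{rmk-III-I} and \S \ref{subsubsec-I to II} are defined on $D^*_{\cris}(V)[\tfrac 1 p]$ and preserve the underlying $V$, so they induce identifications of the Galois families on their domains of definition. Concretely, \S \ref{rmk-III-I} embeds $U_{III}^{\bar\rho}$ into an enlarged Type I space, while \S \ref{subsubsec-I to II} gives a birational identification between $U_I^{\bar\rho}$ and $U_{II}^{\bar\rho}$ whose domain of definition meets every relevant valuation stratum. Hence the three images pairwise overlap, so their union is the image of a connected source; since $R^{\mathbf v}_{\bar\rho}[\tfrac 1 p]$ is Jacobson, connectedness on closed points is enough to conclude.

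The hard part will be the family step. The sub-cases in \S 4.1--4.2 branch on strict valuation inequalities, so the specific sequence $X_n$ of row operations depends on which stratum one is in; restricting to a single $U_T^{\bar\rho}$ fixes the branch, after which convergence of the universal $X_n$ in $\GL_3(R_\Lambda)$ is exactly the content of Proposition \ref{prop-algorithm-lambda}. A subsidiary technical point is reconciling the formal $\Lambda$-framework of that Proposition with the rigid-analytic parameter space $U_T^{\bar\rho}$, which can be handled by approximating by artinian quotients of $\Lambda$ and invoking compatibility of Kisin's construction with specialization.
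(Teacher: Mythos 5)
Your overall strategy --- parametrize strongly divisible lattices by the explicit triples $(a,b,c)$ of Theorem \ref{them-classification-SD-lattices}, run the algorithm in families via Proposition \ref{prop-algorithm-lambda}, and glue the three types using \S\ref{rmk-III-I} and \S\ref{subsubsec-I to II} --- is the same as the paper's, and the stratification of \S\ref{subsec-irreducible-red} plays the same role in both arguments. However there are two concrete gaps.

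First, and most seriously, you write that the family Kisin module gives ``a crystalline Galois family lifting $\bar\rho$, and via the universal property of Kisin's deformation ring a morphism $f_T$.'' A Kisin module over $\fS_\Lambda$ only defines a $G_\infty$-representation over $\Lambda$, hence only a map $R_\infty\to\Lambda$ from the $G_\infty$-deformation ring of bounded height (which exists by Kim's thesis, since $\bar\rho|_{G_\infty}$ is irreducible). To get a map to $R^{\bf v}_{\bar\rho}$ one must know (i) that $R^{\bf v}_{\bar\rho}$ is a quotient of $R_\infty$ --- the paper invokes \cite[Prop.\ 3.9, Lem.\ 3.10]{LLHLM}, which requires verifying that $\mathrm{ad}\,\bar\rho$ is cyclotomic free, a computation with $\omega_3$-weights the paper carries out explicitly; and (ii) that the map $R_\infty\to\Lambda$ factors through that quotient, which the paper deduces from $\Lambda(\cO_{\overline{\Q}_p})\subset R^{\bf v}_{\bar\rho}(\cO_{\overline{\Q}_p})$ together with $\Z_p$-flatness and reducedness. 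Your ``universal property'' step silently assumes both, and the cyclotomic-freeness check in particular is a genuine hypothesis that could fail outside the generic range.

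Second, the claim that ``the estimates \dots\ apply uniformly'' because ``the valuations of the parameters \dots\ are constant on $U_T^{\bar\rho}$'' is incorrect: the valuations vary continuously, only the stratum (the system of inequalities) is fixed. The algorithm of \S4 performs divisions by $d$, by $a$, by $b$, by $p$ in places dictated by which inequality holds, and for the output Frobenius matrix to lie in $\rM_3^h(\fS_\Lambda)$ one must build $\Lambda$ so those quotients are $\Lambda$-integral. This is exactly why the paper chooses $\Lambda=\cO_F[\![a_1,b_1,c_1,z]\!]/(b_1'-pz)$ (making $b_1'/p$ a variable) in case \S\ref{subsubsec-irr-1}, and $\Lambda=\cO_F[\![a_1,c_1,w,z]\!]/(pz-a_1((s-r)w-sc_1))$ with $w=b_1/a_1$ in case \S\ref{subsubsec-hard-1}; these relations are not cosmetic, they are what makes Proposition \ref{prop-algorithm-lambda} applicable. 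Your plan of ``a complete noetherian local domain surjecting onto an affinoid algebra presenting $U_T^{\bar\rho}$'' does not produce these relations automatically, and without them the convergence in $\GL_3(R_\Lambda[\frac{1}{\fm}])$ fails or the limit is not $\fS_\Lambda$-integral. Related to this, the gluing step is not a single ``birational identification'': one must verify that after the change of type, the transformed parameters land in the enlarged Situation (ii) of \S\ref{subsec-TypeII} (with its own valuation constraints $v_p(ac),v_p(bc),v_p(bc^2)>0$, etc.), which is a case-by-case check the paper does for both \S\ref{subsubsec-hard-1} and \S\ref{subsubsec-hard-2} and for Types I and III separately.

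The remaining points --- (path-)connectedness of the valuation strata and the Jacobson reduction to closed points --- are plausible but also unargued; note the paper avoids both by directly producing connected $\Spec(\Lambda_i)$ that chain $x_1$ to $x_2$, rather than trying to cover all closed points at once.
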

\begin{proof} Let $R, S $ be $\Z_p$-algebras, set $R (S):  = \Hom_{\Z_p-{\rm algebra}} (R ,S )$. 
Given $x_1$, $x_2 \in R^{\bf v}_{\bar \rho}(\cO_{F})$ so that $\rho_{x_i}: G _{\Q_p}\to  \GL_3 (\cO_F)$  are two $\cO_F$-lattices in crystalline representations,  we will show that $x_i$ are connected by several $\Spec (\Lambda_i)$ so that $\Lambda_i$ are quotients of $R^{\bf v}_{\bar \rho}$ and $\Lambda_i$ are $\cO_F$-domains. 

To be more precise,  write $\Lambda= \Lambda_i$. We will construct  Kisin module $\M_{\Lambda}$  satisfying the following: 

\begin{enumerate}
\item For every  $y \in \Lambda(\cO_{\overline {\Q}_p})$, $\M_y: = \M_\Lambda \otimes_{\Lambda, y }\cO_{\overline \Q_p}$ is the unique Kisin module for a crystalline representation  assumed in Theorem. 

\item  $\Lambda$ is a complete local neotherian $\cO_F$-domain. 
\end{enumerate}  
It is well-known that $\M_\Lambda$ defines a representation $\rho _\Lambda: G_{\infty} \to \GL_3 (\Lambda)$. Since $\bar \rho|_{G_{\infty}} $ is irreducible if and only if $\bar \rho$ is irreducible, by \cite{Kim-thesis}, the universal deformation ring of $ R_{\infty}$ which parameterize $G_{\infty}$-deformation of $\bar \rho|_{G_{\Q_p}}$ with finite $E$-height $s$ does exist,  By \cite[Prop 3.9, Lem. 3.10]{LLHLM}, if ${\rm ad} \bar \rho$ is cyclotomic free then  $R^{\bf v }_{\bar \rho}$  is a  quotient of  $R_{\infty}$. In our situations, $\bar \rho|_{I_{\Q_{p}}}\simeq  \bigoplus\limits_{i = 0}^2\omega_3^{a^{p ^i}}$ with $a = (s-p) p + (r+1) p ^2,  r p + sp ^2, (r+1) p  + (s-p) p ^2, (s-p +1) + r p ^2$.  Then ${\rm ad} \bar \rho\simeq  \bigoplus\limits_{i , j = 0} ^2 \omega_3^{a^{p ^i- p ^j}}$. It suffices to show that $a ^{p ^i - p ^j}\not \equiv  p ^2 \mod p ^3 -1$ and this can be verified by the assumption that $2 \leq r \leq p-2, \ \  2+p \leq s \leq r + p-2.$

Since $\Lambda(\cO _{\overline \Q_p}) \subset R^{\bf v}_{\bar \rho } (\cO_{\overline \Q_p})$ by (1) in the above, $ R^{\bf v}_{\bar \rho}$ and $\Lambda$ are both $\Z_p$-flat and reduced, we conclude that the natural map $\pi: R_\infty \to \Lambda$  factors through $R ^{\bf v}_{\bar \rho}.$ In particular, since $\Lambda$ is a domain,  $\pi$ induces a connected subscheme of $\Spec (R ^{\bf v}_{\bar \rho}[\frac 1 p])$. Now our aim is to construct several such $\Lambda_i$ so that  $x_j\in \Lambda_{i_j} (\cO _{\Qpbar})$ for $j =1 ,2$. 

Now let us discuss each cases in \S \ref{subsec-irreducible-red}. 
First consider the case in \S \ref{subsubsec-irr-1}: $\Vbar = \Ind^{G_{\Q_p}}_{G_{\Q_{p ^3}}} \omega_3^{ (s-p) p + (r+1) p ^2}$. In this case, both $D_{\cris}(\rho_{x_i})$ have strongly divisible lattices in Type I (we simply say $x_i$ belongs to Type I in the following) and satisfying $b ' > 1$. Set $\Lambda= \cO_F[\![ a_1 , b_1 , c_1 , z ]\!]/ (b_1' - p z)$. Clearly, by calculation in \S \ref{subsubsec-first-case} and Proposition \ref{prop-algorithm-lambda}, we obtain required Kisin module $\M _\Lambda$ satisfying the above conditions and both $x_i$ are in $\Lambda (\cO _F)$. Hence $\Spec (R ^{\bf v}_{\bar \rho}[\frac 1 p])$ is connected in this case. The proof for case in  \S \ref{subsubsec-irr-2} is the same. 

The remaining two cases are much harder.  Let us treat the case in \S \ref{subsubsec-hard-1}. We first show that if $x_1$, $x_2$ are both in Type I or Type II, then they are in the same connected component. First suppose that $x_i$ belongs to  Type I so that $v_p (b'_1)<1$ and $v_p (a_1) < v_p (b_1)$, then we may consider $\Lambda= \cO _F [\![a_1,   c_1, w, z]\!]/ (pz - a_1 ((s-r)w -s c_1))$ where $w= \frac{b_1}{a_1}$ and hence $b_1' = (s-r)b_1 - sa_1c_1 = a_1 ((s-r)w -s c_1)$. According to the computations in \S \ref{subsubsec-I-1}, we construct a Kisin module $\fM _\Lambda$ so that $x_i \in \Lambda (\cO_{\Qpbar})$. Hence $x_i$ are inside one connected component of $\Spec (R^{\bf v}_{\bar \rho}[\frac 1 p])$.  If $x_i$ belongs to Type II so that $v_p (b_2') < 1$ and $v_p (a_2)< v_p (b_2)$, then we use the similar argument via  computation after \eqref{Assump-1}, and conclude that $x_i$ are connected in this case. 

To treat the case that  $x_i$ from different Types or one of $x_i$ in Type III,  we use \S \ref{subsec-relations} and \S \ref{subsec-TypeII} Situation \eqref{situtaion-2}. We will show that there \emph{exists} a $x_1$ in Type I connects to a $x_2$ in Type II, and any $x_1$ in Type III connects to  a $x_2$ in Type II. 
Now first assume that $x_1$ is in Type I, since all $x_i$ in Type I are connected, we may select $x_1 = {\rm I} (a_1, b _1 , c_1)$ so that $v_p (a_1c_1 )< 1$, $v_p (a_1)> v_p (c_1)$ and $b_1 = a_1c_1$. According to \S \ref{subsubsec-I to II}, we see that ${\rm I}(a_1, b_1, c_1) = {\rm II}(a_2 , b_2 , c_2)$ with $a_2 = a_1 + c_1 p ^r$, $b_2 = b_1 -a_1 c_1 + c_1^{-1}p ^{s-r}= c_{1}^{-1}p ^{s-r}$ and $c_2 = c_1^{-1}$.  We easily check that $a_2, b_2 , c_2$ are in the Situation \eqref{situtaion-2}(b). Then the calculation for  Situation \eqref{situtaion-2}(b) shows that $x_1$ is in the same connect components of $x' = {\rm II} (a_2, b _2, c'_2)$ with $c'_2 \in \varpi \cO _F$. To be more precise, pick  $\alpha\in F$ so that $v_p (\alpha) = \min  \{ v_p (b_2 / a_2), v_p (p /a_2), v_p ( a_2)\}$. Note that $v_p (\alpha )>0 $ and $v_p (c_2 \alpha)> 0$ by our calculation the above.  Now set $\Lambda= \cO_F[\![z]\!]$, the calculation for Situation \eqref{situtaion-2} (b) show that there exists a Kisin module $\fM_{\Lambda}$ by let $c _2 = \frac{z}{\alpha}$ so that 
$x_1 \in \Lambda(\cO_{\Qpbar})$ and $x_2 = {\rm II}(a_2 , b_2 , c'_2)\in \Lambda(\cO_{\Qpbar})$ with $c'_2 \in \varpi \cO_F$. This complete a proof that any $x_1$ in Type I connects a $x_2$ in Type II, hence all $x_2$ in Type II.

If $x_1$ is in type III, then by \S \ref{subsubsec-I to II},   we have ${\rm III}(a_3, b_3, c_3) = {\rm II}(a_3 + \mu p ^r , b_3 c_3 -a_3 \mu , \mu ^{-1})$. The condition in \S \ref{subsubsec-hard-1} (3) exactly shows that $x_1$ is in Situation \eqref{situtaion-2} (b). Then the same argument as the above shows that $x_1$ connects to a $x_2 = {\rm II} ((a_3 + \mu p ^r , b_3 c_3 -a_3 \mu , c_2)$ with $c_2 \in \varpi \cO_F$. Hence any $x_1$ in Type III connects to a $x_2$ in Type II and hence all $x_2$ in Type II. 

Finally we treat the case in \S \ref{subsubsec-hard-2}. Our strategy is the  same as before. If both  $x_i$ belongs to  Type I so that $v_p (b'_1)<1$ and $v_p (a_1) >  v_p (b_1)$, then consider $\Lambda= \cO _F [\![b_1,   c_1, w, z]\!]/ (pz - b_1 ((s-r) -s w c_1))$ where $w= \frac{a_1}{b_1}$ and hence $b_1' = (s-r)b_1 - sa_1c_1 = b_1 ((s-r) -s  w c_1)$. According to the computations in \S \ref{subsubsec-I-1}, we construct a Kisin module $\fM _\Lambda$ so that $x_i \in \Lambda (\cO_{\Qpbar})$. Hence $x_i$ are inside one connected component of $\Spec (R^{\bf v}_{\bar \rho}[\frac 1 p])$.  If $x_i$ belongs to Type II so that $v_p (b_2') < 1$ and $v_p (b_2)< v_p (a_2)$, then we use the similar argument via  computation before \eqref{Assump-1}, and conclude that $x_i$ are connected in this case. 

To treat the case that  $x_i$ from different Types or one of $x_i$ in Type III,  we use the \ref{subsec-relations} and \ref{subsec-TypeII} Situation \eqref{situtaion-2} (a). 
Now first assume that $x_1$ is in Type I, since all $x_i$ in Type I are connected, we may select $x_1 = {\rm I} (a_1, b _1 , c_1)$ so that $v_p (b_1)< v_p (a_1)< 1$, $v_p (b _1)> v_p (c_1^2 )$. By  \S \ref{subsubsec-I to II}, we see that ${\rm I}(a_1, b_1, c_1) = {\rm II}(a_2 , b_2 , c_2)$ with $a_2 = a_1 + c_1 p ^r$, $b_2 = b_1 -a_1 c_1 + c_1^{-1}p ^{s-r}$ and $c_2 = c_1^{-1}$.  We easily check that $a_2, b_2 , c_2$ are in the Situation \eqref{situtaion-2}(a). Pick  $\alpha\in F$ so that $v_p (\alpha) \leq \min  \{ \frac{v_p (b_2 )}{2} \}$. Note that $v_p (\alpha )>0 $ and $v_p (c_2 \alpha)> 0$ by our calculation the above.  Now set $\Lambda= \cO_F[\![z]\!]$, the calculation for Situation \eqref{situtaion-2} (a) show that there exists a Kisin module $\fM_{\Lambda}$ by letting $c _2 = \frac{z}{\alpha}$ so that 
$x_1 \in \Lambda(\cO_{\Qpbar})$ and $x_2 = {\rm II}(a_2 , b_2 , c'_2)\in \Lambda(\cO_{\Qpbar})$ with $c'_2 \in \varpi \cO_F$. This complete the  proof that any $x_1$ in Type I connects a $x_2$ in Type II, hence all $x_2$ in Type II. 
If $x_1$ is in type III, then by \S \ref{subsubsec-I to II},   we have ${\rm III}(a_3, b_3, c_3) = {\rm II}(a_3 + \mu p ^r , b_3 c_3 -a_3 \mu , \mu ^{-1})$. The condition in \S \ref{subsubsec-hard-2} (3) exactly shows that $x_1$ is in Situation \eqref{situtaion-2} (a). Then the same argument as the above shows that $x_1$ connects to a $x_2 = {\rm II} (a_3 + \mu p ^r , b_3 c_3 -a_3 \mu , c_2)$ with $c_2 \in \varpi \cO_F$. Hence any $x_1$ in Type III connects to a $x_2$ in Type II and hence all $x_2$ in Type II. 
\end{proof}

 %\input{introduction.tex}
%\newpage
%\input{theoretical}
%\newpage
%\input{explicit}
%\newpage=
%\input{descent}
%\newpage

\bibliography{semistable_bibliography}
\bibliographystyle{abbrv}

\end{document}